\newtheorem{theorem}{Theorem}[section]
\newtheorem{lemma}[theorem]{Lemma}
\newtheorem{corollary}[theorem]{Corollary}
\newtheorem{proposition}[theorem]{Proposition}
\theoremstyle{definition}
\newtheorem{definition}[theorem]{Definition}
\theoremstyle{remark}
\newtheorem{remark}[theorem]{Remark}
\newtheorem{observation}[theorem]{Observation}
\newtheorem{notation}[theorem]{Notation}
\numberwithin{equation}{section}
\newcommand{\Z}{\mathbb Z}
\newcommand{\Rc}{{\rm  R}}
\newcommand{\R}{{\mathbb  R}}
\newcommand{\hocolimit}{{\mathrm{hocolim}}}
\newcommand{\eps}{\varepsilon}
\newcommand{\x}{\mathbf{x}}
\newcommand{\y}{\mathbf{y}}
\newcommand{\z}{\mathbf{z}}
\newcommand{\A} {\mathcal{A}}
\newcommand{\B} {\mathcal{B}}
\newcommand{\HH} {{\rm H}}
\newcommand {\hide}[1]{}
\renewcommand{\x}{\mathbf{x}}
\renewcommand{\y}{\mathbf{y}}
\newcommand{\s}{\mathbf{s}}
\renewcommand{\z}{\mathbf{z}}
\newcommand{\Suspension}{{\mathbf  S}}
\newcommand{\dist}{{\rm  dist}}
\begin{document}

\title[Topological types of parametrized arrangements]{
On the number of topological  types occurring in a parametrized family 
of arrangements
\footnote{2000 Mathematics Subject Classification 14P10, 14P25}}
\author{Saugata Basu}
\address{School of Mathematics,
Georgia Institute of Technology, Atlanta, GA 30332, U.S.A.}
\email{saugata.basu@math.gatech.edu}
\thanks{The author was supported in part by NSF grant CCF-0634907.}
\keywords{Combinatorial Complexity, O-minimal Structures, 
Homotopy Types, Arrangements}

\begin{abstract}
Let ${\mathcal S}(\R)$ be an o-minimal structure 
over $\R$,
$T \subset \R^{k_1+k_2+\ell}$ a closed definable set, 
and 
$$
\displaylines{
\pi_1: \R^{k_1+k_2+\ell}\rightarrow \R^{k_1 + k_2}, \\
\pi_2: \R^{k_1+k_2+\ell}\rightarrow \R^{\ell}, \\
\pi_3: \R^{k_1 + k_2} \rightarrow \R^{k_2} 
}
$$
the projection maps as depicted below.

\[
\begin{diagram}
\node{\R^{k_1 + k_2 + \ell}}\arrow{e,t}{\pi_1}
\arrow{s,l}{\pi_2} 
\node{\R^{k_1+k_2}}\arrow{s,l}{\pi_3}     \\
\node{\R^{\ell}}\node{\R^{k_2}}
\end{diagram}
\]
For any collection ${\mathcal A} = \{A_1,\ldots,A_n\}$ of 
subsets of $\R^{k_1+k_2}$,
and $\z \in \R^{k_2}$, let $\A_\z$ denote the collection of
subsets of $\R^{k_1}$ 
\[
\{A_{1,\z},\ldots, A_{n,\z}\}\]
where 
$A_{i,\z} = A_i \cap \pi_3^{-1}(\z), \;1 \leq i \leq n$.
We prove that there exists a constant $C = C(T) > 0$ such that 
for any family 
${\mathcal A} = \{A_1,\ldots,A_n\}$ of definable sets,
where each $A_i = \pi_1(T \cap \pi_2^{-1}(\y_i))$,
for some  $\y_i \in \R^{\ell}$,
the number of distinct stable homotopy types amongst the arrangements
$\A_\z,\; \z \in \R^{k_2}$ is bounded by
$
\displaystyle{
C \cdot n^{(k_1+1)k_2}
}
$
while the number of distinct homotopy types is bounded by
$
\displaystyle{
C \cdot n^{(k_1+3)k_2}.
}
$
This generalizes to the 
o-minimal setting, bounds of the same type
proved in \cite{BV06} for semi-algebraic and semi-Pfaffian families.
One  technical tool used in the proof of the above results
is a pair of topological comparison theorems reminiscent of Helly's theorem
in convexity theory and these  might be of independent
interest in the quantitative study of arrangements.
\end{abstract}
\maketitle

\section{Introduction}

The study of arrangements is a very important subject in discrete and
computational geometry, where one studies {\em arrangements} of $n$ 
subsets of $\R^k$ 
(often referred to as objects of the arrangements)
for fixed $k$ and large values of $n$ (see \cite{Agarwal} for a survey
of the known results from this area).
The precise nature of the objects in an arrangements 
will be discussed in more details below. Common examples 
consist of arrangements of hyperplanes, balls or simplices in $\R^k$.
More generally one considers  arrangements of objects of ``bounded
 description complexity''.
This means that each set in the arrangement is defined by a first order 
formula in the language of ordered fields involving 
at most a constant number of polynomials whose  degrees are also 
bounded by a constant (see \cite{Matousek}).

In this paper we consider parametrized families of 
arrangements. The question we will be interested in most, is the number
of ``topologically'' distinct arrangements which can occur in such a family
(precise definition of the topological type of an arrangement 
is given later (see Definition \ref{def:topologicaltype})).
Parametrized arrangements occur quite frequently in practice. For instance,
take any arrangement $\A$ in $\R^{k_1+k_2}$ and let 
$\pi:\R^{k_1+k_2} \rightarrow \R^{k_2}$ be the projection on the last
$k_2$ co-ordinates. Then for each $\z \in \R^{k_2}$, 
the intersection of the arrangement $\A$ with the  fiber $\pi^{-1}(\z)$, is
an arrangement $\A_\z$ in $\R^{k_1}$ and the family of the arrangements
$\{\A_\z\}_{\z \in \R^{k_2}}$ is an example of a parametrized family
of arrangements. Even though the number of arrangements in the family
$\{A_\z\}_{\z \in \R^{k_2}}$ is infinite, it follows from 
Hardt's triviality theorem  
generalized to  o-minimal structures (see Theorem \ref{the:hardt} below)
that the number of ``topological types'' occurring
amongst them is finite and can be effectively bounded in terms of the
$n,k_1,k_2$ 
up to multiplication by a constant that depends only on the 
particular  family from which the objects of the arrangements are drawn.
If by topological type we mean homeomorphism type, then the best known 
upper bound on the number of types occurring is doubly exponential in
$k_1,k_2$. However, if we consider the weaker notion of homotopy type, then
we obtain a singly exponential bound.
We conjecture that a singly exponential bound also holds for homeomorphism
types as well.

We now make precise the class of arrangements that we consider and also the
notion of topological type of an arrangement.

\subsection{Combinatorial Complexity in O-minimal Geometry}
In order to put the study of the combinatorial complexity of
arrangements in a more natural mathematical context, as well
as to elucidate the proofs of the main results in the area, 
a new framework was introduced in \cite{Basu9}
which is a significant generalization of the settings mentioned above.
We recall here the basic definitions of this framework
from \cite{Basu9}, referring the reader
to the same paper for further details and examples.

We first recall an important model theoretic notion --
that of o-minimality -- which plays a crucial role in this
generalization.
  
\subsubsection{O-minimal Structures}
O-minimal structures were invented and first studied by
Pillay and Steinhorn in the pioneering papers
\cite{PS1,PS2}. Later the theory was further
developed through contributions of other researchers, most notably
van den Dries, Wilkie, Rolin, Speissegger amongst others
\cite{Dries2,Dries3,Dries4,Wilkie,Wilkie2,Rolin}. We particularly
recommend the book by van den Dries \cite{Dries} and the notes by
Coste \cite{Michel2} for an easy introduction to the topic as well as the
proofs of the basic results that we use in this paper.

\begin{definition}[o-minimal structure]
\label{def:o-minimal}
An o-minimal structure over 
a real closed field 
$\Rc$ is a sequence 
${\mathcal S}(\Rc) = ({\mathcal S}_n)_{n \in {\mathbb N}}$, where 
each ${\mathcal S}_n$ is a collection of subsets of $\Rc^n$
(called the {\em definable sets} in the structure) satisfying the 
following axioms (following the exposition in \cite{Michel2}). 

\begin{enumerate}
\item
All algebraic subsets of $\Rc^n$ are in ${\mathcal S}_n$.
\item
The class ${\mathcal S}_n$ is closed under complementation and
finite unions and intersections.
\item
If $A \in {\mathcal S}_m$ and $B \in {\mathcal S}_n$ then
$A \times B \in {\mathcal S}_{m+n}$.
\item
If $\pi: \Rc^{n+1} \rightarrow \Rc^{n}$ is the projection map on the
first $n$ co-ordinates and $A \in {\mathcal S}_{n+1}$, then 
$\pi(A)  \in {\mathcal S}_n$.
\item
The elements of ${\mathcal S}_1$ are precisely finite unions of points
and intervals.
\end{enumerate}
\end{definition}

The class of semi-algebraic sets is one obvious example of such a structure,
but in fact there are much richer classes of sets which have been proved
to be o-minimal (see \cite{Michel2,Dries}). 

\subsubsection{Admissible Sets}
We now recall from \cite{Basu9} the definition of
the class of sets that will play the role of sets with
bounded description complexity mentioned above.

\begin{definition}[admissible sets]
\label{def:admissible}
Let ${\mathcal S}(\R)$ be an o-minimal structure over 
$\R$ and let $T \subset \R^{k+\ell}$
be a fixed definable set. 
Let $\pi_1: \R^{k+\ell} \rightarrow \R^{k}$
(respectively $\pi_2: \R^{k+\ell}  \rightarrow \R^{\ell}$) 
be the projections onto the first $k$ (respectively last $\ell$) co-ordinates.

\[
\begin{diagram}
\node{}\node{T \subset \R^{k+\ell}}
\arrow{sw,t}{\pi_1}
\arrow{se,t}{\pi_2}
\node{}\\
\node{\R^{k}}\node{}\node{\R^{\ell}}
\end{diagram}
\]

We will call a subset $S$ of $\R^k$ to be a $(T,\pi_1,\pi_2)$-set if
\[
S = T_\y = \pi_1(\pi_2^{-1}(\y)\cap T)
\]
for some $\y \in \R^{\ell}$. 

If $T$ is some fixed  definable set, we
call a family of $(T,\pi_1,\pi_2)$-sets to be a 
$(T,\pi_1,\pi_2)$-family. We wil also refer to a finite
$(T,\pi_1,\pi_2)$-family as an {\em arrangement} of $(T,\pi_1,\pi_2)$-sets.
\end{definition}

\subsection{Stable Homotopy Equivalence}
\label{subsec:stable}
For any finite 
CW-complex $X$ we denote by $\Suspension X$ the suspension of 
$X$ and for $n \geq 0$, we denote by  
$\Suspension^n X$ the $n$-fold iterated suspension
$\underbrace{\Suspension \circ \Suspension \circ \cdots \circ
\Suspension}_{n \text{ times}}X$. 

Note that if $i: X \hookrightarrow Y$ is an inclusion map,
then there is an obvious induced inclusion map
$\Suspension^n i: \Suspension^n X \hookrightarrow \Suspension^n Y$ between
the $n$-fold iterated suspensions of $X$ and $Y$.

Recall from \cite{Spanier-Whitehead}
that for two finite CW-complexes $X$ and $Y$, an element of
\begin{equation}
\label{eqn:defofS-maps}
\{X;Y\}= \varinjlim_i \; [\Suspension^i X,\Suspension^i Y]
\end{equation}
is called an {\em S-map} (or map in the {\em suspension category}).
An S-map $f \in \{X;Y\}$ is represented by the homotopy class of a map
$f: \Suspension^N X \rightarrow \Suspension^N Y$ for some $N \geq 0$.

\begin{definition}[stable homotopy equivalence]
\label{def:S-equivalence}
An S-map $f \in \{X;Y\}$ is an S-equivalence 
(also called a stable homotopy equivalence) if it admits an
inverse $f^{-1} \in \{Y;X\}$. In this case we say that $X$ and $Y$ are
stable homotopy equivalent.
\end{definition}

If $f \in \{X;Y\}$ is an S-map, then $f$ induces a homomorphism
\[
f_* : \HH_*(X,\Z) \rightarrow \HH_*(Y,\Z)
\]
between the homology groups of $X$ and $Y$. 

The following theorem characterizes stable homotopy equivalence in terms of
homology.

\begin{theorem}
\label{the:stable}
\cite[pp. 604]{Dieudonne}
Let $X$ and $Y$ be two finite CW-complexes. 
Then $X$ and $Y$ are stable homotopy
equivalent if and only if  there exists an S-map
$f \in \{X;Y\}$ 
which induces isomorphisms $f_* : \HH_*(X,\Z) \rightarrow \HH_*(Y,\Z)$.
\end{theorem}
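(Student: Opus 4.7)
The forward direction is routine functoriality: if $f \in \{X;Y\}$ is an S-equivalence with S-inverse $g$, then $f_* g_* = \mathrm{id}$ and $g_* f_* = \mathrm{id}$ on integral homology, so $f_*$ is an isomorphism. For the converse, my plan is to represent the given S-map by an honest continuous map $\varphi: \Suspension^N X \to \Suspension^N Y$ (possible by the very definition of $\{X;Y\}$ as a direct limit) and then argue that, after further suspension, $\varphi$ is literally a homotopy equivalence of CW complexes; any homotopy inverse then represents a two-sided S-inverse of $f$.

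First I would replace $\varphi$ by its iterated suspension $\Suspension^M \varphi$ for $M$ large enough that both $\Suspension^{N+M} X$ and $\Suspension^{N+M} Y$ are simply connected. Since $\Suspension Z$ is path connected for any non-empty $Z$ and $\Suspension^2 Z$ is simply connected, taking $N+M \geq 2$ already suffices, no matter how pathological the components of $X$ and $Y$ are. The reduced suspension isomorphism $\tilde{\HH}_i(Z;\Z) \cong \tilde{\HH}_{i+1}(\Suspension Z;\Z)$ is natural in $Z$, so the hypothesis that $f_*$ is an isomorphism on $\HH_*(-;\Z)$ transports to the statement that $(\Suspension^M \varphi)_*$ is an isomorphism on integral homology.

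At this point I would invoke the simply connected version of Whitehead's theorem: a map between simply connected CW complexes that induces an isomorphism on $\HH_*(-;\Z)$ is a homotopy equivalence. The standard route is to form the mapping cone $C_{\Suspension^M \varphi}$, deduce from the cofiber long exact sequence in homology that $\tilde{\HH}_*(C_{\Suspension^M \varphi};\Z) = 0$, check via van Kampen that this mapping cone is simply connected, and then apply the Hurewicz theorem inductively to conclude that $\pi_*(C_{\Suspension^M \varphi}) = 0$. This forces $\Suspension^M \varphi$ to be a weak, hence a genuine, homotopy equivalence of CW complexes, which in turn gives the desired inverse element of $\{Y;X\}$.

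The main obstacle is really bookkeeping rather than any deep new input: one has to verify naturality of the suspension isomorphism against the chosen representative $\varphi$, and confirm that the various connectivity gains are preserved on forming the mapping cone. All the heavier ingredients, namely Hurewicz, Whitehead, and the cofiber long exact sequence in homology, are entirely classical, so the argument reduces to assembling them correctly in the stable category.
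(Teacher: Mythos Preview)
Your argument is correct and is essentially the classical proof of this fact. Note, however, that the paper does not actually prove Theorem~\ref{the:stable}: it is quoted as a known result with a citation to Dieudonn\'e, and no argument is supplied. So there is no ``paper's own proof'' to compare against; you have simply filled in the standard justification that the author chose to outsource to the literature.

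One small remark on your write-up: when you say ``the hypothesis that $f_*$ is an isomorphism on $\HH_*(-;\Z)$ transports to the statement that $(\Suspension^M\varphi)_*$ is an isomorphism,'' it is worth making explicit that the very \emph{definition} of $f_*$ for an S-map already goes through a representative $\varphi$ and the suspension isomorphisms, so the hypothesis is literally that $\varphi_*:\HH_*(\Suspension^N X;\Z)\to\HH_*(\Suspension^N Y;\Z)$ is an isomorphism; no additional transport is needed. Also, in the mapping-cone step you might find it marginally cleaner to work with the mapping cylinder pair $(M_{\Suspension^M\varphi},\Suspension^{N+M}X)$ and invoke the relative Hurewicz theorem directly, which sidesteps the separate van Kampen check on the cone. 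Either route is fine.
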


\subsection{Diagrams and Co-limits}
The arrangements that we consider are all finitely triangulable.
In other words, the union of objects of an arrangement
is homeomorphic to a finite simplicial complex,
and each individual  object in the arrangement will correspond to a 
sub-complex of this simplicial complex. 
It will be more convenient to work in the category 
of finite regular cell complexes, instead of just simplicial complexes.

Let ${\mathcal A} = \{A_1,\ldots,A_n\}$,  where each $A_i$ is a sub-complex
of a finite regular cell complex.
We will denote by $[n]$ the set $\{1,\ldots,n\}$ and
for $I \subset [n]$ we will denote by 
${\mathcal A}^I$ (respectively ${\mathcal A}_I$) the regular cell complexes
$\displaystyle{\bigcup_{i \in I} A_i}$ (respectively 
$\displaystyle{\bigcap_{i \in I} A_i}$). Notice that if
$J \subset I \subset [n]$, then
$$
\displaylines{
{\mathcal A}^J \subset {\mathcal A}^I, \cr
{\mathcal A}_I \subset {\mathcal A}_J .
}
$$

We will call the collection of sets $\{|\A_I|\}_{I \subset [n]}$ together with
the inclusion maps $i_{I,J}: |\A_I| \hookrightarrow |\A_J|, J \subset I$, the
{\em diagram} of $\A$.
Notice that 
(even though we do not use this fact), 
$|\A^{[n]}|$ is the co-limit of the diagram of $\A$.
For $I \subset [n]$ we will denote by $\A[I]$ the sub-arrangement
$\{ A_i \,\mid \, i \in I\}$.

\subsection{Diagram Preserving Maps}
Now let 
$
{\mathcal A} = \{A_1,\ldots,A_n\}, \;
{\mathcal B} = \{B_1,\ldots,B_n\}
$ 
where each $A_i,B_j$ is a sub-complex of a finite regular cell complex 
for $1 \leq i,j \leq n$.

\begin{definition}[diagram preserving maps]
\label{def:maps}
We call a map $f: |\A^{[n]}| \rightarrow |\B^{[n]}|$ 
to be {\em diagram preserving}
if $f(|\A_I|) \subset |\B_I|$ for every $I \subset [n]$. 
(Notice that the above property is equivalent to 
$f(|A_i|) \subset |B_i|$ for every $i \in [n]$ but the previous property
will be more convenient for us later when we extend the definition    
of diagram preserving maps to homotopy co-limits (see 
Definition \ref{def:dgpheofhocolimits}).)
We say that two maps
$f,g: |\A^{[n]}| \rightarrow |\B^{[n]}|$ are {\em diagram homotopic} if 
there exists a homotopy 
$h: |\A^{[n]}| \times [0,1] \rightarrow |\B^{[n]}|$, such that
$h(\cdot,0)= f, h(\cdot,1) = g$ and $h(\cdot,t)$ is diagram preserving for
each $t \in [0,1]$.

More generally,
we call a map 
$f: \Suspension^N |\A^{[n]}| \rightarrow \Suspension^N |\B^{[n]}|$ 
to be {\em diagram preserving}
if $f(\Suspension^N |\A_I|) \subset \Suspension^N |\B_I|$ 
for every $I \subset [n]$. We say that two maps
$f,g: \Suspension^N |\A^{[n]}| \rightarrow \Suspension^N |\B^{[n]}|$ 
are {\em diagram homotopic} if there exists
a homotopy $h: \Suspension^N |\A^{[n]}| \times [0,1] \rightarrow 
\Suspension^N |\B^{[n]}|$ such that
$h(\cdot,0)= f, h(\cdot,1) = g$ and $h(\cdot,t)$ is diagram preserving for
each $t \in [0,1]$.

We say that $f: | \A^{[n]}| \rightarrow |\B^{[n]}|$ is a diagram preserving
homeomorphism if there exists a diagram preserving  inverse map
$g: |\B^{[n]}| \rightarrow |\A^{[n]}|$  such that the induced maps
$g\circ f : | \A^{[n]}| \rightarrow | \A^{[n]}| $ and
$f\circ g : |\B^{[n]}| \rightarrow |\B^{[n]}|$ are 
${\mathrm{Id}}_{| \A^{[n]}|}$ and 
${\mathrm{Id}}_{|\B^{[n]}|}$, respectively.

We say that $f: |\A^{[n]}| \rightarrow |\B^{[n]}|$ is a diagram preserving
homotopy equivalence if there exists a diagram preserving  inverse map
$g: |\B^{[n]}| \rightarrow | \A^{[n]}|$  such that the induced maps
$g\circ f : | \A^{[n]}| \rightarrow | \A^{[n]}|$ and
$f\circ g : |\B^{[n]}| \rightarrow |\B^{[n]}|$ are diagram
homotopic to ${\mathrm{Id}}_{| \A^{[n]}|}$ and 
${\mathrm{Id}}_{|\B^{[n]}|}$, respectively.

We say that an S-map $f \in \{ |\A^{[n]}|;|\B^{[n]}| \}$ 
is a diagram preserving stable homotopy equivalence if it is
represented by a diagram preserving map
\[
\tilde{f}: \Suspension^N | \A^{[n]}| \rightarrow \Suspension^N |\B^{[n]}|
\]
such that there exists a diagram preserving  inverse map
\[
\tilde{g}: \Suspension^N |\B^{[n]}| \rightarrow \Suspension^N | \A^{[n]}|
\]
for which the induced maps
\[
\tilde{g}\circ \tilde{f} : \Suspension^N | \A^{[n]}| \rightarrow \Suspension^N | \A^{[n]}|,
\] 
and
\[
\tilde{f}\circ \tilde{g} : \Suspension^N |\B^{[n]}| \rightarrow \Suspension^N |\B^{[n]}|
\]
are diagram homotopic to ${\mathrm{Id}}_{\Suspension^N | \A^{[n]}|}$ and 
${\mathrm{Id}}_{\Suspension^N |\B^{[n]}|}$, respectively.
\end{definition}

Translating these topological definitions into the language of arrangements,
we say that:

\begin{definition}[topological type of an arrangement]
\label{def:topologicaltype}
Two arrangements $\A,\B$ are homeomorphic (respectively homotopy
equivalent, stable homotopy equivalent) if there exists a diagram preserving
homeomorphism (respectively homotopy equivalence, stable homotopy equivalence)
between them. 
\end{definition}

\begin{remark}
\label{rem:distinct}
Note that, since two definable sets might be stable homotopy equivalent,
without being homotopy equivalent (see \cite[pp. 462]{Spanier}), and also 
homotopy equivalent without being homeomorphic, the notions
of homeomorphism type, homotopy type and stable homotopy type
are each strictly weaker than the previous one.
\end{remark}

The main results of this paper can now be stated.

\subsection{Main Results}
\label{sec:application}

Let ${\mathcal S}(\R)$ be an o-minimal structure 
over 
$\R$,
$T \subset \R^{k_1+k_2+\ell}$ a closed and bounded definable set,
and let $\pi_1: \R^{k_1+k_2+\ell}\rightarrow \R^{k_1 + k_2}$
(respectively,
$\pi_2: \R^{k_1+k_2+\ell}\rightarrow \R^{\ell}$, 
$\pi_3: \R^{k_1 + k_2} \rightarrow \R^{k_2}$)
denote the projections onto the first $k_1 + k_2$ 
(respectively, the last $\ell$, the last $k_2$) co-ordinates. 
For any collection
${\mathcal A} = \{A_1,\ldots,A_n\}$ of $(T,\pi_1,\pi_2)$-sets,
and $\z \in \R^{k_2}$, we will denote by $\A_\z$ the collection of
sets, $\{A_{1,\z},\ldots, A_{n,\z}\}$, where 
$A_{i,\z} = A_i \cap \pi_3^{-1}(\z), 1 \leq i \leq n$.

A fundamental theorem in o-minimal geometry is Hardt's trivialization 
theorem (Theorem \ref{the:hardt} below) which says  that there exists a
definable partition of $\R^{k_2}$ into a finite number of 
definable sets $\{T_i\}_{i \in I}$
such that for each $i \in I$,
all fibers $\A_{\z}$ with 
$\z \in T_i$ are definably homeomorphic. A very natural
question is to ask for an upper bound on the size of this  partition (which
will also give an upper bound on the number of homeomorphism types amongst the
arrangements $\A_{\z}, \z \in \R^{k_2}$).

Hardt's theorem is a corollary of
the existence of {\em cylindrical cell decompositions} of definable sets
proved in \cite{KPS} (see also \cite{Dries,Michel2}).
When $\A$ is a 
$(T,\pi_1,\pi_2)$-family for some fixed definable set 
$T \subset \R^{k_1+k_2 +\ell}$, with 
$\pi_1: \R^{k_1+k_2+\ell} \rightarrow \R^{k_1+k_2}$,
$\pi_2: \R^{k_1+ k_2+\ell} \rightarrow \R^{\ell}$, 
$\pi_2: \R^{k_1+ k_2} \rightarrow \R^{k_2}$
the usual projections, and $\#\A = n$,
the quantitative definable cylindrical  cell decomposition theorem in
\cite{Basu9} gives
a doubly exponential (in $k_1k_2$) upper bound on the cardinality of $I$ and
hence on the number of homeomorphism types  
amongst the arrangements  $\A_{\z}, \z \in \R^{k_2}$.
A tighter (say singly exponential) bound on the number of homeomorphism
types of the fibers would be very interesting but is unknown at present.
Note that we cannot hope for a bound which is better than singly exponential
because the lower bounds on the number of topological types proved in 
\cite{BV06} also applies in our situation.

In this paper we give tighter (singly exponential) upper bounds on the
number of homotopy types occurring amongst the fibers 
$\A_{\z}, \z \in \R^{k_2}$.
We prove the following theorems. The first theorem gives a bound on the
number of stable homotopy types 
of the arrangements $\A_{\z}, \z \in \R^{k_2}$, while the second
theorem gives a slightly worse bound for homotopy types.

\begin{theorem}
\label{the:homotopy_closed_stable}
There exists a constant $C = C(T) > 0$ such that 
for any collection
${\mathcal A} = \{A_1,\ldots,A_n\}$ of $(T,\pi_1,\pi_2)$-sets
the number of distinct  stable homotopy types amongst the arrangements 
$\A_{\z}, \z \in \R^{k_2}$ is bounded by
\[
C \cdot n^{(k_1+1)k_2}.
\]
\end{theorem}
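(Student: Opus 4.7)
The plan is to combine a Helly-type topological comparison theorem (the one advertised in the abstract) with a standard counting argument for cells of a definable arrangement in the parameter space $\R^{k_2}$. The Helly-type ingredient I would isolate first states the following: given two arrangements $\A = \{A_1,\ldots,A_n\}$ and $\B = \{B_1,\ldots,B_n\}$ of sub-complexes of finite regular cell complexes of dimension at most $k_1$, if for every $I \subset [n]$ with $|I| \leq k_1+1$ the sub-arrangements $\A[I]$ and $\B[I]$ are diagram preserving stable homotopy equivalent, then so are $\A$ and $\B$. The cutoff $k_1+1$ reflects the ambient dimension of the fibers $\pi_3^{-1}(\z) \cong \R^{k_1}$: via a nerve or iterated Mayer--Vietoris argument, higher-order intersections contribute only in dimensions above $k_1$ and, after suspension, can be recovered from sub-families of size $\leq k_1+1$ together with the homological characterization of S-equivalence (Theorem \ref{the:stable}).

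Granting this reduction, fix any $I \subset [n]$ with $|I| \leq k_1+1$. Since $\# \A[I] \leq k_1+1$ and each $A_i$ is a $(T,\pi_1,\pi_2)$-set, the arrangement $\A[I]_\z$ together with its entire diagram varies with $\z$ as a single definable family of sets of the form $(T_I,\rho_1,\rho_2)$, where $T_I$ is a definable set constructed from $T$ whose structure depends only on $T$ and $k_1$. The o-minimal Hardt triviality theorem (Theorem \ref{the:hardt}) applied to this family over $\R^{k_2}$ produces a partition of $\R^{k_2}$ into at most $C_0 = C_0(T)$ definable cells $\{V_j^{(I)}\}_j$, each itself a $(T',\rho_1',\rho_2')$-set for a fixed definable $T'$ depending only on $T$ and $k_1$, such that the diagram of $\A[I]_\z$---and a fortiori its diagram preserving stable homotopy type---is constant as $\z$ ranges over any one such cell.

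By the Helly-type reduction, $\A_\z$ has constant diagram preserving stable homotopy type on each cell of the common refinement of the partitions $\{V_j^{(I)}\}_j$ as $I$ runs over all subsets of $[n]$ of cardinality at most $k_1+1$. This common refinement is the arrangement in $\R^{k_2}$ of $O(n^{k_1+1})$ sets all drawn from a single fixed admissible $(T',\rho_1',\rho_2')$-family. The quantitative bound of \cite{Basu9} on the number of cells in an arrangement of $N$ admissible sets in $\R^{k_2}$, which is $O(N^{k_2})$, then gives the claimed bound $C \cdot n^{(k_1+1)k_2}$ on the total number of stable homotopy types.

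The principal obstacle is the Helly-type reduction itself. One must actually produce a diagram preserving S-equivalence $\A \to \B$ (not merely check that various homology groups agree) from a coherent system of S-equivalences on every sub-diagram $\A[I] \to \B[I]$ with $|I| \leq k_1+1$. I would attempt this inside the framework of homotopy co-limits of the arrangement diagrams already being set up in the section on diagrams and co-limits: truncate the relevant nerve-like bar construction at level $k_1+1$, show that for arrangements of sub-complexes of a $k_1$-dimensional complex this truncation preserves integral homology in all degrees, and then upgrade the resulting homological agreement to a diagram preserving S-equivalence through Theorem \ref{the:stable}. The delicate point is to make the construction natural enough in the sub-arrangement $I$ to glue into a single diagram preserving stable map rather than just a bare homotopy or homology equivalence, and this is where I expect most of the technical work of the proof to concentrate.
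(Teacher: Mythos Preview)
Your overall strategy matches the paper's closely: reduce to the truncated homotopy co-limit at level $k_1$, show this truncation controls the stable homotopy type (this is the paper's Theorem~\ref{the:union_stable}), partition $\R^{k_2}$ so that the truncated co-limit is constant on each piece, and count the pieces via Theorem~\ref{the:betti}. You have also correctly located the crux: one needs a single coherent diagram-preserving map on $\hocolimit_{k_1}(\A_\z)$, not a family of unrelated equivalences on the sub-diagrams $\A[I]_\z$.

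The gap is precisely there, and your proposal does not close it. Applying Hardt's theorem separately to each family $\{\A[I]_\z\}_\z$ produces, for each $I$ with $|I|\le k_1+1$, a partition and a trivialization; on the common refinement each sub-diagram $\A[I]_\z$ is indeed constant up to diagram-preserving homeomorphism, but these homeomorphisms need not agree on overlaps $\A[J]_\z \subset \A[I]_\z$ for $J \subset I$, and without that compatibility they do not assemble into a map of truncated co-limits. The paper resolves this with an additional device you have not proposed: an \emph{$m$-adaptive family of triangulations} (Proposition~\ref{prop:triangulation}), i.e.\ parametrized triangulations of the unions $T_{\z,\y_0,\ldots,\y_p}$ for all $p \le k_1$ satisfying the explicit downward-compatibility condition (item~(3)) that the triangulation of any $(p{+}1)$-fold union refines those of all its sub-unions. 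These compatible triangulations are packaged into a \emph{thickened} truncated co-limit $\hocolimit^+_{k_1}(\A_\z,\bar\eps)$, a regular cell complex whose combinatorial type is constant on each cell of the common refinement; this yields an honest diagram-preserving homeomorphism of thickened co-limits, which Lemma~\ref{lem:thickenedlimit} then passes down to the required diagram-preserving homotopy equivalence of $\hocolimit_{k_1}(\A_\z)$. Once this compatible-triangulation machinery is in place, the counting argument you describe is exactly what the paper does (Lemma~\ref{lem:bound}).
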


If we replace stable homotopy type by homotopy type, we obtain 
a slightly weaker bound.

\begin{theorem}
\label{the:homotopy_closed_ordinary}
There exists a constant $C = C(T) > 0$ such that 
for any collection
${\mathcal A} = \{A_1,\ldots,A_n\}$ of $(T,\pi_1,\pi_2)$-sets
the number of distinct  homotopy types occuring amongst the
arrangements $\A_{\z}, \z \in \R^{k_2}$ is bounded by
\[
C \cdot n^{(k_1+3)k_2}.
\]
\end{theorem}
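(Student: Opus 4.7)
The plan is to prove Theorem \ref{the:homotopy_closed_ordinary} by the same general scheme as Theorem \ref{the:homotopy_closed_stable}, replacing the stable-homotopy topological comparison step with an ordinary-homotopy analogue; the enlarged exponent $k_1+3$ (versus $k_1+1$) should drop out of this replacement. Concretely, I would first establish, as the technical core of the argument, a Helly-type comparison theorem of the following form: if $\A = \{A_1,\ldots,A_n\}$ and $\B = \{B_1,\ldots,B_n\}$ are two arrangements of sub-complexes of finite regular cell complexes each of topological dimension at most $k_1$, and a coherent family of diagram-preserving homotopy equivalences between the sub-diagrams indexed by subsets $I \subset [n]$ with $|I| \le k_1+3$ is given, then this data extends to a diagram-preserving homotopy equivalence $|\A^{[n]}| \to |\B^{[n]}|$ in the sense of Definition~\ref{def:topologicaltype}.

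Granting this, I would transfer the problem to the parameter space $\R^{k_2}$. For each $I \subset [n]$ with $|I| \le m := k_1+3$, the intersection $A_I = \bigcap_{i \in I} A_i \subset \R^{k_1+k_2}$ is a definable set, and the collection
\[
\mathcal{B}_m = \{A_I \,\mid\, I \subset [n],\; |I| \le m\}
\]
is a definable family of cardinality $O(n^m)$. Applying the quantitative cylindrical cell decomposition from \cite{Basu9} (the same tool used in the proof of Theorem \ref{the:homotopy_closed_stable}) to $\mathcal{B}_m$ as a family indexed by $\z \in \R^{k_2}$, I obtain a partition of $\R^{k_2}$ into at most $C' (\#\mathcal{B}_m)^{k_2} = C'' n^{(k_1+3)k_2}$ definable cells $\{T_\alpha\}$, over each of which the restricted family $\{A_{I,\z}\}_{|I| \le m}$ is definably trivial in a way that respects every inclusion $A_{I,\z} \hookrightarrow A_{J,\z}$, $J \subset I$. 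For any $\z,\z' \in T_\alpha$, the trivialization then supplies the coherent diagram-preserving homotopy equivalences on small intersections demanded by the comparison theorem of the previous paragraph, so $\A_\z$ and $\A_{\z'}$ are homotopy equivalent as arrangements. The number of distinct homotopy types is thus bounded by the number of cells, giving the claimed bound $C \cdot n^{(k_1+3)k_2}$.

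The main obstacle is the ordinary-homotopy comparison theorem itself. In the stable setting (the input to Theorem \ref{the:homotopy_closed_stable}), a Mayer--Vietoris spectral sequence on the nerve diagram collapses for dimensional reasons once intersections of cardinality up to $k_1+1$ are controlled, and Theorem~\ref{the:stable} then upgrades the resulting homology isomorphism to a stable homotopy equivalence. In the unstable setting, one must instead invoke obstruction-theoretic or Postnikov-tower machinery to lift this homology-level data to a genuine diagram-preserving homotopy equivalence; the extra ``$+2$'' in the exponent reflects the two additional layers of intersection data required to pin down the low-dimensional homotopy information (roughly, the $\pi_0$ and $\pi_1$ of the relevant pieces) that stable equivalence ignores. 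Carrying out this obstruction argument in the diagram-preserving, o-minimal category while keeping the bounds independent of $n$ is the main technical burden of the proof.
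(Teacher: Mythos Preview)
Your high-level scheme matches the paper's exactly: replace the stable comparison Theorem~\ref{the:union_stable} by its ordinary-homotopy analogue Theorem~\ref{the:union_ordinary} (which requires $\A\approx_{k_1+2}\B$, i.e.\ control of intersections of size up to $k_1+3$), and otherwise rerun the proof of Theorem~\ref{the:homotopy_closed_stable}. The paper's own proof of Theorem~\ref{the:homotopy_closed_ordinary} is literally that one sentence. Your account of the ``$+2$'' is also in the right spirit, though in the paper it arises more simply than via obstruction theory: the inclusion $|\hocolimit_{k+2}(\B)|\hookrightarrow|\hocolimit(\B)|$ is a $(k+1)$-equivalence because the missing cells have dimension at least $k+3$, and Proposition~\ref{prop:homotopy} (Whitehead) turns a $(k+1)$-equivalence between complexes of dimension at most $k$ into a genuine homotopy equivalence.

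There is, however, a real error in your quantitative step. You invoke ``the quantitative cylindrical cell decomposition from \cite{Basu9}'' to partition $\R^{k_2}$ into at most $C'(\#\mathcal{B}_m)^{k_2}$ trivializing cells, but the paper states explicitly that cylindrical decomposition gives a \emph{doubly} exponential bound in $k_1k_2$; avoiding that is the whole point of the paper. The singly-exponential count is Theorem~\ref{the:betti}, and that theorem only \emph{counts} connected components of an admissible arrangement---it supplies no trivialization. Hence one cannot simply ``trivialize and then count the cells'': any partition coming from Hardt triviality or cylindrical decomposition with the triviality property you want has no useful bound on its size. What the paper does instead (in the proof of Theorem~\ref{the:homotopy_closed_stable}, to be rerun with $m=k_1+2$) is use the $m$-adaptive triangulations of Proposition~\ref{prop:triangulation} to produce closed sets $B_{m,\alpha}\subset\R^{k_2+(m+1)\ell}$ depending only on $T$; their specializations at the chosen $(\y_{i_0},\ldots,\y_{i_m})$ form an arrangement $\mathcal{B}$ of $O\!\left({n\choose m+1}\right)$ admissible sets in $\R^{k_2}$, and on each $C\in\mathcal{C}(\mathcal{B})$ the thickened truncated homotopy co-limits $|\hocolimit^+_{m}(\A_\z,\bar\eps)|$ are isomorphic cell complexes \emph{by construction of the adaptive triangulation}. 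Lemma~\ref{lem:thickenedlimit} then yields $\A_{\z_1}\approx_m\A_{\z_2}$ for $\z_1,\z_2\in C$, Theorem~\ref{the:union_ordinary} finishes the comparison, and only now is Theorem~\ref{the:betti} applied to bound $\#\mathcal{C}(\mathcal{B})$ by $C\cdot n^{(m+1)k_2}=C\cdot n^{(k_1+3)k_2}$. The decoupling of ``constancy of structure'' (via adaptive triangulations) from ``counting'' (via Theorem~\ref{the:betti}) is precisely the step your outline collapses into a single appeal to cylindrical decomposition, and without that decoupling the singly-exponential bound does not follow.
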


\section{Background}
In this section we describe some prior work in the area of bounding the
number of homotopy types of fibers of a definable map and their connections
with the results presented in this paper.

We begin with a definition.

\begin{definition}[$\A$-sets]
\label{def:A-sets}
Let ${\mathcal A} = \{A_1,\ldots,A_n\}$, such that each $A_i \subset \R^k$ 
is a $(T,\pi_1,\pi_2)$-set.
For $I \subset \{1,\ldots,n\}$, we let ${\mathcal A}(I)$ denote the set
\begin{equation}
\label{eqn:basic}
\bigcap_{i \in I \subset [n]} A_i  \;\; \cap 
\bigcap_{j \in [n]\setminus I}
(\R^k \setminus A_j) 
\end{equation}
and we will call such a set to be a basic ${\mathcal A}$-set.
We will denote by ${\mathcal C}({\mathcal A})$
the set of non-empty connected components of all basic
${\mathcal A}$-sets.
 
We will call definable subsets $S \subset \R^k$ defined by a
Boolean formula whose atoms are of the form,
$x \in A_i, 1 \leq i \leq n$, an ${\mathcal A}$-set. 
An ${\mathcal A}$-set is thus a union of basic ${\mathcal A}$-sets.
If  $T$ is closed, and 
the Boolean formula
defining $S$ has no negations, then $S$ is closed by definition
(since each $A_i$ is closed) and we call such a set an
${\mathcal A}$-closed set.

Moreover, if $V$ is any closed  definable subset of $\R^k$,
and $S$ is an ${\mathcal A}$-set (respectively ${\mathcal A}$-closed set), then we
will call $S \cap V$ to be an $({\mathcal A},V)$-set 
(respectively $({\mathcal A},V)$-closed set).
\end{definition}

\subsection{Bounds on the Betti numbers of Admissible Sets}
The problem of bounding the Betti numbers of ${\mathcal A}$-sets
is investigated in \cite{Basu9}, where several results known in the
semi-algebraic and semi-Pfaffian case are extended to this general
setting. In particular, we will need the following theorem proved there.

\begin{theorem}\cite{Basu9}
\label{the:betti}
Let ${\mathcal S}(\R)$ be an o-minimal structure over 
$\R$ and let $T \subset \R^{k+\ell}$
be a closed definable set. 
Then, there exists a constant $C = C(T) > 0$ depending only
on $T$ such that
for any arrangement ${\mathcal A} = \{A_1,\ldots,A_n\}$ 
of $(T,\pi_1,\pi_2)$-sets of $\R^k$ the following holds.

For every $i, 0 \leq i \leq k$,
\begin{equation}
\label{eqn:bettibound}
\sum_{D \in {\mathcal C}({\mathcal A})} b_i(D) \leq C \cdot n^{k-i}.
\end{equation}

\end{theorem}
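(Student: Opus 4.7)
The plan is to bound $\sum_{D\in {\mathcal C}(\A)} b_i(D)$ by a three-stage reduction: first establish a uniform (in $n$) bound on the Betti numbers of each individual multi-intersection $A_{j_1}\cap\cdots\cap A_{j_m}$; second, use a Mayer--Vietoris spectral sequence to bound the Betti numbers of $\A$-closed unions; third, invoke Alexander-type duality to pass back from closed unions to the locally closed basic sets, which is what introduces the sharp exponent $k-i$ rather than the naive $i+1$ produced by Mayer--Vietoris alone.

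For the first stage, for each $m\ge 1$ I would introduce the auxiliary definable set
\[
T^{(m)} \;:=\; \{(\x,\y_1,\ldots,\y_m)\in\R^{k+m\ell} : (\x,\y_j)\in T,\ 1\le j\le m\},
\]
equipped with the projections to $\R^k$ and to $(\R^{\ell})^m$. Any $m$-fold intersection $A_{j_1}\cap\cdots\cap A_{j_m}$ is then a $(T^{(m)},\pi_1^{(m)},\pi_2^{(m)})$-set, and since $T^{(m)}$ depends only on $T$ and $m$ the o-minimal triangulation theorem furnishes a constant $C'_m=C'_m(T)$ controlling all its Betti numbers independently of $n$ and of the parameters $\y_{j_i}$. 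The Mayer--Vietoris spectral sequence
\[
E_1^{p,q} \;=\; \bigoplus_{I\subset[n],\ |I|=p+1} \HH^q(\A_I) \;\Longrightarrow\; \HH^{p+q}(\A^{[n]})
\]
then yields, for any $\A$-closed subset of $\R^k$, the bound $b_j \leq \sum_{p+q=j}\binom{n}{p+1} C'_{p+1} = O(n^{j+1})$, with the implicit constant depending only on $T$.

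For the third stage, observe that each basic set ${\mathcal A}(I)$ is locally closed. Using Alexander--Lefschetz duality inside a large bounded ambient region (preceded by a preliminary approximation step that replaces the $A_i$ by closed relatives when $T$ is merely closed but not bounded), one expresses $b_i(D)$ for a connected component $D$ of ${\mathcal A}(I)$ in terms of the $(k-i-1)$-th Betti number of a complementary $\A$-closed construction. Applying the previous stage with $j=k-i-1$ produces an $O(n^{k-i})$ bound per choice of $I$; summing over connected components, over the relevant index sets $I$, and absorbing the finitely many constants $C'_m$ (only $m\le k+1$ contribute non-trivially, since higher intersections of generic dimension are empty) into a single $C=C(T)>0$ completes the proof.

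The hardest part is the duality step. One must formulate the duality correctly for definable locally closed sets in the o-minimal setting, where no Poincar\'e-style algebraic input is available, using only the existence of definable triangulations; and one must verify that the $n$-exponent drops from $j+1$ to $k-i$ uniformly, with the bookkeeping over $I\subset[n]$ and over connected components of each ${\mathcal A}(I)$ surviving this exchange. The reduction of the non-closed case (when $T$ is closed but unbounded, so the $A_i$ themselves need not be closed) to the closed bounded one is the secondary technical hurdle.
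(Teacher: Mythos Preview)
This theorem is not proved in the present paper at all: it is quoted from \cite{Basu9} as a background result, and the only information the paper gives about its proof is the Remark immediately following it, which says that the argument in \cite{Basu9} (and the semi-algebraic and semi-Pfaffian analogues in \cite{BPRbook2,GV}) ``depends on an argument involving the Mayer--Vietoris sequence for homology.'' Your stages~1 and~2 are therefore in line with what the cited proof actually does: uniform Hardt-type bounds on the Betti numbers of $m$-fold intersections, fed into a Mayer--Vietoris inequality, is exactly the mechanism.

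Stage~3, however, has a genuine gap as written. You obtain an $O(n^{k-i})$ bound \emph{per basic set} ${\mathcal A}(I)$ via duality and then propose to sum over $I\subset[n]$. There are $2^n$ such subsets, so the naive sum is exponential; even if you first restrict to nonempty basic sets (of which there are $O(n^k)$, itself the $i=0$ case of the theorem you are trying to prove), the resulting bound is $O(n^{2k-i})$, not $O(n^{k-i})$. The proofs in \cite{Basu9,BPRbook2,GV} do \emph{not} bound each ${\mathcal A}(I)$ separately and then sum. Instead they derive Mayer--Vietoris-type inequalities that control the \emph{total} $\sum_{D\in{\mathcal C}({\mathcal A})} b_i(D)$ directly in terms of sums of Betti numbers of intersections $\A_J$ with $\#J$ bounded in terms of $k-i$, exploiting that the basic sets form a partition. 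The ``duality'' ingredient there is also not Alexander--Lefschetz duality for individual cells but rather inequalities relating the Betti numbers of a closed set and of its complement inside a larger closed set, read off from the long exact sequence of a pair; this is both easier to set up in the o-minimal category and compatible with the global summation. Your instinct that the bookkeeping over $I$ is the crux is correct, but the mechanism you sketch does not survive that bookkeeping.
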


\begin{remark}
The main intuition behind the bound in Theorem \ref{the:betti} 
(as well as similar results in the semi-algebraic 
and semi-Pfaffian settings) is
that the homotopy type (or at least the Betti numbers) 
of a definable set in $\R^k$  defined in
terms of $n$ sets belonging to some fixed definable family, 
depend only on the interaction of  these sets at most $k+1$ at a time.
This is reminiscent of Helly's theorem in convexity theory (see \cite{DGK63}) 
but in a homotopical setting. 
This observation is also used to give an efficient algorithm
for computing the Betti numbers of arrangements 
(see \cite[Section 8]{Basu_survey}).
However, the proof of Theorem \ref{the:betti} in 
\cite{Basu9} (as well as the proofs of similar results in the semi-algebraic
\cite{BPRbook2}
and semi-Pfaffian settings \cite{GV}) depends on an argument 
involving the Mayer-Vietoris sequence for homology, and does not
require more detailed information about homotopy types. 
In Section \ref{sec:homotopic_helly} below,
we make the above intuition mathematically precise. 

We prove two theorems (Theorems  \ref{the:union_stable} and 
\ref{the:union_ordinary} below) 
and these auxiliary results are the keys to 
proving the main results of this paper (Theorems
\ref{the:homotopy_closed_stable} and  \ref{the:homotopy_closed_ordinary}). 
Moreover, these auxiliary results could also be of independent interest
in the quantitative study of arrangements.
\end{remark}

\subsection
{Homotopy types of the fibers of a semi-algebraic map}
Theorem \ref{the:betti} gives tight bounds on the topological complexity
of an ${\mathcal A}$-set in terms of the cardinality of ${\mathcal A}$,
assuming that the sets in ${\mathcal A}$ belong to some fixed definable
family.
A problem closely related to the problem we consider in this paper is to
bound the number of topological types of the fibers of a projection restricted
to an arbitrary $\A$-set.

More precisely,
let $S \subset \R^{k_1+k_2}$ be a set definable in an o-minimal
structure over the reals (see \cite{Dries}) and let
$\pi: \R^{k_1+k_2} \rightarrow \R^{k_2}$ denote the projection
map on the last $k_2$ co-ordinates. We consider the fibers,
$S_{\z} = \pi^{-1}(\z) \cap S$ for different $\z$ in $\R^{k_2}$.
Hardt's trivialization theorem, 
(Theorem \ref{the:hardt} below) shows  that there exists a
definable partition of $\R^{k_2}$ into a finite number of 
definable sets $\{T_i\}_{i \in I}$
such that for each $i \in I$ and any point
$\z_i \in T_i$, $\pi^{-1}(T_i) \cap S$ is 
definably homeomorphic to
$S_{\z_i} \times T_i$ by a fiber preserving homeomorphism.
In particular, for each $i \in I$, all fibers $S_{\z}$ with 
$\z \in T_i$ are definably homeomorphic. 

In case $S$ is an ${\mathcal A}$-set, with $\A$ a 
$(T,\pi_1,\pi_2)$-family for some fixed definable set 
$T \subset \R^{k_1+k_2 +\ell}$, with 
$\pi_1: \R^{k_1+k_2+\ell} \rightarrow \R^{k_1+k_2}$,
$\pi_2: \R^{k_1+ k_2+\ell} \rightarrow \R^{\ell}$, 
$\pi_2: \R^{k_1+ k_2} \rightarrow \R^{k_2}$, 
the usual projections, and $\#\A = n$,
the quantitative definable  cylindrical cell decomposition theorem in
\cite{Basu9} gives
a doubly exponential (in $k_1k_2$) upper bound on the cardinality of $I$ and
hence on the number of homeomorphism types of the fibers of the map $\pi_3|_S$.
A tighter (say singly exponential) bound on the number of homeomorphism
types of the fibers would be very interesting but is unknown at present.

Recently, the problem of obtaining a tight bound on the number
of topological types of the fibers of a definable map for
semi-algebraic and semi-Pfaffian sets was 
considered in \cite{BV06}, and it was shown that the number of 
distinct homotopy types of the fibers of 
such a map  can be bounded (in terms of the format of the formula defining
the set) by a function  singly exponential in $k_1 k_2$. 
In particular, the combinatorial part of the bound is also singly exponential. 
A more precise statement in the case of semi-algebraic sets
is the following theorem  which appears in \cite{BV06}.

\begin{theorem}\cite{BV06}
\label{the:mainBV}
Let ${\mathcal P} \subset \R[X_1,\ldots,X_{k_1},Y_1,\ldots,Y_{k_2}]$,
with $\deg(P) \leq d$ for each $P \in {\mathcal P}$ and cardinality
$\#{\mathcal P} = n$.
Then, for any fixed ${\mathcal P}$-semi-algebraic set $S$
the number of different homotopy types of fibers $\pi^{-1} (\y) \cap S$
for various $\y \in \pi(S)$ is bounded by
\[
(2^{k_1} n k_2d)^{O(k_1 k_2)}.
\]
\end{theorem}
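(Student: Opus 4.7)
The goal is to identify, for each $\y \in \pi(S)$, a combinatorial invariant of the fiber $S_{\y}$ that determines its homotopy type and that can take only $(2^{k_1} n k_2 d)^{O(k_1 k_2)}$ values. The plan combines three ingredients: (i) a Helly-type reduction of the homotopy type of a $\mathcal{P}_{\y}$-semi-algebraic set in $\R^{k_1}$ to a diagram built from intersections of few of the defining sets; (ii) quantitative Hardt triviality applied to each such low-order intersection as a subset of $\R^{k_1+k_2}$; and (iii) the Oleinik--Petrovsky--Thom--Milnor bound to count cells in a common refinement.

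\emph{Step 1 (Helly-type reduction).} After replacing $S$ by a closed and bounded approximation $\widetilde{S}$ obtained by infinitesimal perturbation of the defining polynomials (and checking that the fibers keep their homotopy type via a Puiseux series argument), one views $\widetilde S_{\y}$ as a Boolean combination of at most $2^{k_1}$ basic sign-condition pieces over the family $\mathcal{A} = \{A_1,\ldots,A_n\}$ of sets associated with $\mathcal{P}$. By a Mayer--Vietoris/nerve argument, the homotopy type of each such $\mathcal{A}_{\y}$-closed piece is determined by the diagram of non-empty intersections $A_{i_1,\y} \cap \cdots \cap A_{i_j,\y}$ with $j \leq k_1 + 1$, together with the inclusion maps. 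This is exactly the homotopical analogue of the intuition used to prove Theorem \ref{the:betti}.

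\emph{Step 2 (Local stratification).} For each tuple $I \subset [n]$ with $|I| \leq k_1+1$, apply Hardt's trivialization to the map $\pi\big|_{A_I}$, where $A_I = \bigcap_{i \in I} A_i \subset \R^{k_1+k_2}$. This yields a semi-algebraic partition of $\R^{k_2}$ on which the fiber $A_{I,\y}$ has constant homeomorphism type. Because $A_I$ is defined by at most $k_1+1$ polynomials of degree $\le d$, a quantitative cylindrical decomposition gives a bound of $(k_2 d)^{O(k_1)}$ on the number of strata for each $I$: here it is essential that only $O(k_1)$ many polynomials appear, which keeps the decomposition's blow-up singly exponential in $k_1 k_2$ rather than doubly exponential.

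\emph{Step 3 (Common refinement and counting).} Refine all the partitions from Step 2 simultaneously over the $O(n^{k_1+1})$ tuples $I$. By the Oleinik--Petrovsky--Thom--Milnor bound, the number of cells of the common refinement is at most $\bigl(n^{k_1+1}(k_2 d)^{O(k_1)}\bigr)^{O(k_2)} = (n k_2 d)^{O(k_1 k_2)}$. On each such cell the entire low-order intersection diagram is topologically constant, and hence by Step 1 the homotopy type of $S_{\y}$ is constant. Multiplying by the $2^{k_1}$ choices of Boolean combination giving the basic pieces yields the claimed bound.

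\emph{Main obstacle.} The difficult part is Step 1. The Mayer--Vietoris spectral sequence yields the Betti-number version of the Helly-type reduction essentially for free (this is how Theorem \ref{the:betti} is proved in \cite{Basu9}), but upgrading from Betti numbers to actual homotopy type requires a genuinely topological argument: either a nerve-style theorem with carefully verified contractibility hypotheses, or a homotopy colimit model of the arrangement combined with a Whitehead-type recognition principle. Controlling this upgrade so that only data indexed by subsets $I$ with $|I| \leq k_1+1$ is needed is exactly the content of the auxiliary Helly-type results (Theorems \ref{the:union_stable} and \ref{the:union_ordinary}) that the present paper promises to establish.
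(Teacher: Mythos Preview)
The paper does not prove Theorem~\ref{the:mainBV}; it is quoted from \cite{BV06}, and the only information the paper gives about its proof is Remark~2.3, which says that the argument in \cite{BV06} ``relies on techniques involving perturbations of the original polynomials in order to put them in general position, as well as Thom's Isotopy Theorem.''  Your proposal is therefore not a reconstruction of the cited proof at all: what you have sketched is essentially the strategy the \emph{present} paper develops for its own Theorems~\ref{the:homotopy_closed_stable} and~\ref{the:homotopy_closed_ordinary} (truncated homotopy colimit plus a Helly-type comparison, then count strata in the parameter space), specialised back to the semi-algebraic setting.  That is a legitimate alternative route, and indeed the paper's point is precisely that this route avoids the perturbation/Thom-isotopy machinery of \cite{BV06} and hence generalises to o-minimal structures.

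Two substantive issues with your sketch.  First, in Step~1 you claim that intersections of size $\le k_1+1$ determine the \emph{homotopy} type; Theorem~\ref{the:union_ordinary} only gives this with $\le k_1+3$ (the condition is $\A\approx_{k+2}\B$), while $\le k_1+1$ yields only \emph{stable} homotopy type via Theorem~\ref{the:union_stable}.  This does not affect the final $O(k_1k_2)$ exponent, but the distinction matters.  Second, and more seriously, Step~2 asserts that ``a quantitative cylindrical decomposition gives a bound of $(k_2 d)^{O(k_1)}$'' on the number of Hardt strata for a single $A_I$.  Cylindrical decomposition is doubly exponential in the number of variables $k_1+k_2$, regardless of how few polynomials are involved, so this step as written does not give a singly-exponential bound.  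In the present paper this is handled differently: one does not trivialise each $A_I$ separately but uses the $m$-adaptive family of triangulations (Proposition~\ref{prop:triangulation}) to get a single family of closed sets $B_{k_1,\alpha}\subset\R^{k_2+(k_1+1)\ell}$ depending only on $T$, and then counts connected components of the induced arrangement in $\R^{k_2}$ via Theorem~\ref{the:betti}.  In the semi-algebraic case of \cite{BV06}, the corresponding step is exactly where the perturbation-to-general-position and Thom-isotopy arguments enter.  Your Step~2 glosses over the genuinely hard part.
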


\begin{remark}
The proof of Theorem \ref{the:mainBV} however has the drawback that 
it relies on techniques involving perturbations of the original polynomials
in order to put them in general position, as well as
Thom's Isotopy Theorem, and
as such does not extend easily to the o-minimal setting.
The main results of this paper 
(see Theorem \ref{the:homotopy_closed_stable} 
and Theorem \ref{the:homotopy_closed_ordinary})
extend  the combinatorial part of 
Theorem \ref{the:mainBV} to the  more general o-minimal category.
\end{remark}

\begin{remark}
Even though the formulation of Theorem \ref{the:mainBV} seems a little
different from
the main theorems of this paper (Theorems 
\ref{the:homotopy_closed_stable} and \ref{the:homotopy_closed_ordinary}),
they are in fact closely related.
In fact, as a consequence of Theorem \ref{the:homotopy_closed_ordinary}
we obtain bounds on the number of homotopy types
of the fibers of $S$ for any fixed ${\mathcal A}$-set $S$, analogous to the
one in Theorem \ref{the:mainBV}.

More precisely we have:

\begin{theorem}
\label{the:homotopy_general_ordinary}
Let ${\mathcal S}(\R)$ be an o-minimal structure 
over $\R$,
and  
$T \subset \R^{k_1+k_2+\ell}$ a closed and bounded definable set, 
and $\pi_1: \R^{k_1+k_2+\ell}\rightarrow \R^{k_1 + k_2}$, 
$\pi_2: \R^{k_1+k_2+\ell}\rightarrow \R^{\ell}$, and 
$\pi_3: \R^{k_1 + k_2} \rightarrow \R^{k_2}$
the projection maps.
Then, there exists a constant $C = C(T) > 0,$ such that 
for any collection
${\mathcal A} = \{A_1,\ldots,A_n\}$ of $(T,\pi_1,\pi_2)$-sets,
for any fixed ${\mathcal A}$-set $S$
the number of distinct  homotopy types of 
fibers $\pi_3^{-1}(\z) \cap S$
for various $\z \in \pi_3(S)$ is bounded by
\[
C \cdot n^{(k_1+3)k_2}.
\]
\end{theorem}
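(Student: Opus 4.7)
The plan is to reduce Theorem~\ref{the:homotopy_general_ordinary} to Theorem~\ref{the:homotopy_closed_ordinary} by passing to a closed setting, at the cost of at most doubling the size of the arrangement. Since $T$ is closed and bounded, every fiber arrangement $\A_\z$ lies inside a fixed closed ball $B \subset \R^{k_1+k_2}$, so for each $A_i \in \A$ the set $\bar A_i := \overline{B \setminus A_i}$ is a well-defined closed definable subset. Because closure and complement in a fixed ball are definable operations in the o-minimal structure ${\mathcal S}(\R)$, there is a new definable set $T'$ such that the enlarged family $\A' := \{A_1,\ldots,A_n,\bar A_1,\ldots,\bar A_n\}$ of cardinality $2n$ is a $(T',\pi_1',\pi_2')$-family in the sense of Definition~\ref{def:admissible}.

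Next, rewrite the fixed Boolean formula defining $S$ in disjunctive normal form and replace each literal $x \notin A_i$ by $x \in \bar A_i$; the resulting formula uses only intersections and unions, so it defines an $\A'$-closed set $\tilde S$. One would like $S_\z \simeq \tilde S_\z$, but this identification can fail on the boundaries $\partial A_{i,\z}$; simple examples such as $A_1 \cap A_1^c = \emptyset$ versus $A_1 \cap \bar A_1 = \partial A_1$ already show the subtlety. To circumvent this I would introduce an auxiliary parameter $\delta > 0$ and replace each $A_i$ by a definable shrinking $A_i^{-\delta}$ (for instance the sub-level set at $\delta$ of a definable distance-like function vanishing on $\partial A_i$ from inside), so that $A_i^{-\delta}$ and $\overline{B \setminus A_i^{-\delta}}$ become disjoint. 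For all sufficiently small $\delta$ the fiberwise $\A$-set $S_\z$ is then definably homotopy equivalent to the $\A'$-closed set $\tilde S^\delta_\z$ obtained by the formula above, and adjoining $\delta$ to the parameter coordinate produces an arrangement still drawn from a fixed $(T'',\pi_1'',\pi_2'')$-family.

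It now remains to apply Theorem~\ref{the:homotopy_closed_ordinary} to this enlarged family to bound the number of distinct diagram preserving homotopy types of the arrangements $\A'_{\delta,\z}$ by $C' \cdot (2n)^{(k_1+3)k_2}$. Since $\tilde S^\delta$ is built from $\A'$ using only unions and intersections, a diagram preserving homotopy equivalence between two fibers of the enlarged arrangement restricts to a genuine homotopy equivalence between the corresponding $\tilde S^\delta$'s, and the bound transfers to the number of homotopy types of $S_\z$ after absorbing the factor $2^{(k_1+3)k_2}$ into the constant $C(T)$. The main obstacle is making the definable shrinking argument uniform in $\z$: without the infinitesimals exploited in the semi-algebraic proof of \cite{BV06}, one must rely on o-minimal definable choice together with Hardt triviality applied to the two-parameter family indexed by $(\delta,\z)$, in order to guarantee that on each of finitely many definable cells of $\R^{k_2}$ a single small $\delta$ realizes the required homotopy equivalence $S_\z \simeq \tilde S^\delta_\z$ simultaneously.
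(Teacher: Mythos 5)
Your overall strategy matches the paper's: replace the possibly non-closed $\A$-set $S$ by a closed and bounded definable set of the same homotopy type built from an enlarged arrangement drawn from a fixed definable family, and then apply Theorem~\ref{the:homotopy_closed_ordinary} (together with Corollary~\ref{cor:closed_ordinary}, since the replacement set is defined by a Boolean formula without negations). The paper achieves this by invoking the Gabrielov--Vorobjov approximation construction (\cite{GV07}, adapted to families in \cite{Basu9}), which produces a homotopy equivalent closed bounded $\A'$-closed set with $\#\A' = 2k\cdot\#\A$; the resulting bound $C'\cdot(2kn)^{(k_1+3)k_2}$ then absorbs into $C(T)\cdot n^{(k_1+3)k_2}$.

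The gap in your argument is that a single shrinking parameter $\delta$ does not suffice, even on a single fiber. First, the stated disjointness fails as written: $A_i^{-\delta}$ is closed and $\overline{B\setminus A_i^{-\delta}}$ contains $\partial A_i^{-\delta}$, so the two sets meet. But even under the charitable reading --- shrink the positive occurrences to $A_i^{-\delta}$ while keeping the original $\bar A_i = \overline{B\setminus A_i}$ for the negative occurrences, which \emph{are} disjoint --- homotopy type is not preserved. Take $A_1=[0,2]$, $A_2=[0,1]$, $A_3=[1,2]$ in $\R$, $B=[-1,3]$, and $S = A_1\cap A_2^c\cap A_3^c = \emptyset$. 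Then $A_1^{-\delta}\cap\bar A_2\cap\bar A_3 = \{1\}$ for every small $\delta>0$, which is not homotopy equivalent to $\emptyset$. The Gabrielov--Vorobjov replacement is designed precisely to avoid this failure: it perturbs at a nested chain of scales $\delta_1\gg\delta_2\gg\cdots\gg\delta_{2k}$ rather than a single $\delta$, and it is this iteration that accounts for the factor $2k$ in $\#\A'$ and for the correctness of the homotopy equivalence. The ``uniformity in $\z$'' obstacle you flag is real but secondary: the construction must first be correct fiberwise, and then the family version as developed in \cite{Basu9} handles the dependence on the parameter.
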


A similar result with a bound of 
$C \cdot n^{(k_1+1)k_2}$ 
holds for
stable homotopy types as well.
\end{remark}

\section{A Topological Comparison Theorem}
\label{sec:homotopic_helly}
As noted previously, the main underlying idea behind our proof of
Theorem \ref{the:homotopy_closed_stable} is that
the homotopy type of an ${\mathcal A}$-set in $\R^k$ 
depends only on the interaction of sets in ${\mathcal A}$ 
at most $(k+1)$ at a time.
In this section we make this idea precise.

We show that in case 
$\A = \{A_1,\ldots,A_n\}$, with each $A_i$  
a definable, closed and bounded subset of $\R^k$, the  homotopy type of 
any ${\mathcal A}$-closed set is determined by 
a certain sub-complex of 
the {\em homotopy co-limit} of the
diagram  of $\A$.
The crucial fact here is that 
this sub-complex depends only on the intersections of
the sets in $\A$  at most $k+1$ at a time.

In order to avoid technical difficulties, we
restrict ourselves to the category of finite, regular cell complexes (see
\cite{Whitehead} for the definition of a regular cell complex). 
The setting of finite, regular cell complexes suffices for us,
since it is well known that
closed and bounded
definable sets in any o-minimal structure
are finitely triangulable, and hence, 
are homeomorphic to regular cell complexes.

\subsection{Topological Preliminaries}

Let ${\mathcal A} = \{A_1,\ldots,A_n\}$,  where each $A_i$ is a sub-complex
of a finite regular cell complex.
We now define the homotopy co-limit of 
the diagram of $\A$.

\subsubsection{Homotopy Co-limits}
Let $\Delta_{[n]}$ denote the standard simplex of dimension $n-1$ with
vertices in $[n]$ 
(and by $|\Delta_{[n]}|$ the corresponding closed geometric simplex).
For $I \subset [n]$, we denote by $\Delta_I$ 
the $(\#I-1)$-dimensional face of $\Delta_{[n]}$ corresponding
to $I$.

The homotopy co-limit, $\hocolimit(\A)$,  is a CW-complex defined as follows.
\begin{definition}[homotopy co-limit]
\label{def:hocolimit}
\[ 
\hocolimit(\A) =  
\coprod_{I \subset [n]} \Delta_I \times \A_I/\sim
\]
where the equivalence relation $\sim$ is defined as follows.

For $I \subset J \subset [n]$, let $s_{I,J}: |\Delta_I| 
\hookrightarrow |\Delta_J|$
denote the inclusion map of the face $|\Delta_I|$ in $|\Delta_J|$, and let
$i_{I,J}: |\A_J| \hookrightarrow |\A_I|$ denote the inclusion map of
$|\A_J|$ in $|\A_I|$.

Given $({\mathbf s},\x) \in |\Delta_I| \times |\A_I|$ and 
$({\mathbf t},\y) \in |\Delta_J| \times |\A_J|$ with $I \subset J$, 
then $({\mathbf s},\x) \sim 
({\mathbf t},\y)$ if and only if
${\mathbf t} = s_{I,J}({\mathbf s})$ and $\x = i_{I,J}(\y)$.
\end{definition}

Note that there exist two natural  maps 
$$
\displaylines{
f_\A: |\hocolimit(\A)| \rightarrow | \A^{[n]}|, \cr
g_\A: |\hocolimit(\A)| \rightarrow | \Delta_{[n]}|
}
$$
defined by
\begin{equation}
\label{eqn:f_A}
f_\A(\s,\x) = \s,
\end{equation}
and
\begin{equation}
\label{eqn:g_A}
g_\A(\s,\x) = \x.
\end{equation}
where
$(\s,\x) \in |{\Delta}_{I_c}| \times c $,
$c$ is a cell in  ${\mathcal A}^{[n]}$ and 
$I_c = \{i \in [n] \;\mid\; c \in  A_i \}$.

Notice that we have
\[
|\hocolimit(\A)| = 
\bigcup_{I \subset [n]} |\Delta_I| \times |\A_I| \subset
\bigcup_{I \subset [n]} |\Delta_I| \times \A^{[n]}. 
\]

\begin{definition}[truncated homotopy co-limits]
For any $m, 0 \leq m \leq n$, we will denote by  
$\hocolimit_m(\A)$ the  sub-complex of  $\hocolimit(\A)$ defined by
\begin{equation}
\label{eqn:truncatedhomotopycolimit}
\displaystyle{
\hocolimit_m(\A) = g_\A^{-1}({\rm sk}_m(\Delta_{[n]}))}.
\end{equation}
\end{definition}

\begin{definition}
[diagram preserving maps between homotopy co-limits]
\label{def:dgpheofhocolimits}
Replacing in Definition \ref{def:maps},  $| \A^{[n]}|$ and $|\B^{[n]}|$, by 
$|\hocolimit(\A)|$ and $|\hocolimit(\B)|$ respectively,
as well as  $|\A_I|$ and $|\B_I|$ 
by $f_\A^{-1}(|\A_I|)$ and $f_\B^{-1}(|\B_I|)$
respectively,
we get definitions of diagram preserving homotopy
equivalences and  stable homotopy equivalences between 
$|\hocolimit(\A)|$ and $|\hocolimit(B)|$, and more generally
for any $m \geq 0$, between $|\hocolimit_m(\A)|$ and 
$|\hocolimit_m(\B)|$.
\end{definition}

\begin{definition}
We say that $\A \approx_m \B$ if
there exists a diagram preserving homotopy equivalence
\[
\phi: |\hocolimit_m(\A)| \rightarrow |\hocolimit_m(\B)|.
\]

We say that $\A \sim_m \B$, if
there exists a diagram preserving stable homotopy equivalence
$\phi \in \{\hocolimit_m(\A);\hocolimit_m(\B)\}$, represented by
\[
\tilde{\phi}: 
\Suspension^N| \hocolimit_m(\A)|  \rightarrow \Suspension^N |\hocolimit_m(\B)|,
\]
for some $N > 0$.
\end{definition}

\begin{remark}
\label{rem:nomap}
Note that in the above definition the map $\phi$ need not be induced by a
diagram preserving map $\phi: \A^{[n]} \rightarrow \B^{[n]}$
(respectively, 
$
\tilde{\phi}: 
\Suspension^N| \hocolimit_m(\A)|  \rightarrow \Suspension^N |\hocolimit_m(\B)|
$).
Indeed if it was the case then the proofs of 
Theorems \ref{the:union_stable} and
\ref{the:union_ordinary} below would be simplified considerably.
\end{remark}

The two following theorems are the crucial topological ingredients in the
proofs of our main results.

\begin{theorem}
\label{the:union_stable}
Let ${\mathcal A} = \{A_1,\ldots,A_n\}, 
{\mathcal B} = \{B_1,\ldots,B_n\}$ be two families of sub-complexes   
of a finite regular cell complex, such that:
\begin{enumerate}
\item
$\HH_i(|\A^{[n]}|,\Z), \HH_i(|\B^{[n]}|,\Z) = 0$, for all $i \geq k$, and
\item
$\A \sim_k \B$.
\end{enumerate}

Then,
$\A$ and $\B$ are stable homotopy equivalent.
\end{theorem}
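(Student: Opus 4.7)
The plan is to produce, via Theorem~\ref{the:stable}, a diagram-preserving S-map $|\A^{[n]}|\to|\B^{[n]}|$ inducing isomorphisms on all integral homology groups. First I would reduce to the full homotopy colimits: the canonical projection $f_{\A}:|\hocolimit(\A)|\to|\A^{[n]}|$ is a homotopy equivalence---a standard fact for the homotopy colimit of a diagram of inclusions of subcomplexes of a regular cell complex, with an explicit diagram-preserving inverse sending each cell $c$ of $\A^{[n]}$ to $\{s_c\}\times c$ for a fixed barycenter $s_c\in|\Delta_{I_c}|$. The map $f_{\A}$ is diagram-preserving by construction of its strata $f_{\A}^{-1}(|\A_I|)$, and its inverse is diagram-preserving because $c\subset|\A_I|$ iff $I\subset I_c$. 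The same holds for $\B$, so it suffices to produce a diagram-preserving stable homotopy equivalence between $|\hocolimit(\A)|$ and $|\hocolimit(\B)|$.

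The main tool is the spectral sequence of the skeletal filtration $\hocolimit_0(\A)\subset\hocolimit_1(\A)\subset\cdots\subset\hocolimit(\A)$, with $E^1$-term
\[
E^1_{p,q}(\A)\;=\;\bigoplus_{|I|=p+1}\HH_q(|\A_I|,\Z),
\]
whose $d^1$ is an alternating sum of maps induced by the inclusions $|\A_I|\hookrightarrow|\A_{I\setminus\{i\}}|$, and which converges to $\HH_*(|\hocolimit(\A)|,\Z)\cong\HH_*(|\A^{[n]}|,\Z)$; similarly for $\B$. Since the S-equivalence $\tilde{\phi}$ provided by $\A\sim_k\B$ is diagram-preserving, it identifies the columns $p\leq k$ of the two $E^1$-pages together with their $d^1$-differentials, as it restricts stably to an equivalence of each stratum $f_\A^{-1}(|\A_I|)$ with $f_\B^{-1}(|\B_I|)$ for $|I|\leq k+1$. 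Because every cell in the relative pair $(\hocolimit(\A),\hocolimit_k(\A))$ has the form $|\Delta_I|\times c$ with $|I|\geq k+2$, the columns $p>k$ contribute only in total degrees $\geq k+1$; by hypothesis~(1) these contributions must die in the $E^\infty$-term for both arrangements, which rigidly constrains the later differentials and propagates the identification of the truncated pages to an identification of the full $E^\infty$.

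I would then upgrade this homological identification to a genuine diagram-preserving S-map between the full homotopy colimits by extending $\tilde{\phi}$, possibly after further suspension, across the cells of $(|\hocolimit(\A)|,|\hocolimit_k(\A)|)$ one stratum at a time in order of increasing $|I|$. For each $I$ with $|I|\geq k+2$ the extension across $\Suspension^M(|\Delta_I|\times|\A_I|)$ is controlled by a class in a stable homotopy group of the target stratum $f_\B^{-1}(|\B_I|)$; the homology-vanishing hypothesis applied to both $|\A^{[n]}|$ and $|\B^{[n]}|$, together with the spectral-sequence analysis above, forces these obstructions to die after a further suspension, and the extension can be arranged inside the corresponding $\B$-stratum so as to preserve the diagram structure. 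The resulting S-map realizes the homological identification of the previous step, so by Theorem~\ref{the:stable} it is a stable homotopy equivalence, and composing with the inverses of $f_\A$ and $f_\B$ yields the required diagram-preserving S-equivalence from $|\A^{[n]}|$ to $|\B^{[n]}|$.

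The main obstacle is this last step: arranging the cell-by-cell extension to remain simultaneously an S-equivalence and diagram-preserving. As Remark~\ref{rem:nomap} emphasizes, there need be no unstable map $\A^{[n]}\to\B^{[n]}$ at all, so everything must be engineered in the suspension category and matched face by face with the simplicial stratification of the homotopy colimit. This delicate interplay between stable obstruction theory and the diagram-preserving requirement is exactly what motivates the detailed homotopy-colimit machinery developed earlier in the paper.
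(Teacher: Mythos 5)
Your first paragraph is essentially sound in spirit but the explicit section of $f_\A$ you write down---sending a cell $c$ to $\{s_c\}\times c$ with $s_c$ the barycenter of $\Delta_{I_c}$---is not continuous: for a boundary cell $c'$ of $c$ one has $I_c\subsetneq I_{c'}$ in general, so the chosen barycenters jump and the pieces do not glue. The paper instead invokes Smale's version of the Vietoris--Begle theorem (Lemma~\ref{lem:hom1}), which produces a cellular, diagram-preserving homotopy inverse $h_\A$ of $f_\A$ without any explicit formula.

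The serious gap is your third paragraph. You try to \emph{extend} the given S-map $\tilde\phi$ from $\Suspension^N|\hocolimit_k(\A)|$ over the remaining cells of $|\hocolimit(\A)|$, one stratum at a time, while keeping the extension diagram-preserving. This is a genuine stable obstruction-theory problem living in stable homotopy groups (not homology) of the target strata, and the homology-vanishing hypothesis does not kill those obstruction groups; the claim that the spectral sequence analysis ``forces these obstructions to die after a further suspension'' is not justified, and in fact no such extension is constructed in the paper. The paper's proof sidesteps the extension problem entirely by running the composition in the opposite direction: because $h_\A$ is cellular and $|\A^{[n]}|$ has dimension at most $k$, its image automatically lies in $\mathrm{sk}_k(\hocolimit(\A))\subset\hocolimit_k(\A)$. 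One then takes the composite
\[
\Suspension^N|\A^{[n]}|\xrightarrow{\ \Suspension^N h_\A\ }\Suspension^N|\hocolimit_k(\A)|\xrightarrow{\ \tilde\phi\ }\Suspension^N|\hocolimit_k(\B)|\xrightarrow{\ i_{\B,k}\ }\Suspension^N|\hocolimit(\B)|\xrightarrow{\ \Suspension^N f_\B\ }\Suspension^N|\B^{[n]}|,
\]
every factor of which is diagram preserving. It induces a homology isomorphism in degrees $<k$ because $i_{\B,k}$ does (the relative pair has no cells in dimension $\leq k$), and trivially in degrees $\geq k$ by hypothesis~(1); Theorem~\ref{the:stable} then finishes. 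The spectral-sequence bookkeeping in your second paragraph is more than is needed and does not replace the missing extension argument. The key idea you are missing is that one should push $|\A^{[n]}|$ \emph{into} $\hocolimit_k(\A)$ via $h_\A$, rather than try to extend the map \emph{out of} $\hocolimit_k(\A)$.
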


\begin{theorem}
\label{the:union_ordinary}
Let ${\mathcal A} = \{A_1,\ldots,A_n\}, 
{\mathcal B} = \{B_1,\ldots,B_n\}$ be two families of sub-complexes   
of a finite regular cell complex, such that:
\begin{enumerate}
\item
$\dim(A_i), \dim(B_i) \leq k$, for $1 \leq i \leq n$, and
\item
$\A \approx_{k+2} \B$.
\end{enumerate}
Then, 
$\A$ and $\B$ are homotopy equivalent.
\end{theorem}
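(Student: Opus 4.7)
The plan is to establish that the canonical projection $f_\A : |\hocolimit(\A)| \to |\A^{[n]}|$ is a diagram preserving homotopy equivalence (and similarly $f_\B$ for $\B$), thereby reducing the theorem to producing a diagram preserving homotopy equivalence between $|\hocolimit(\A)|$ and $|\hocolimit(\B)|$ out of the given equivalence $\phi$ at level $k+2$. The equivalence $f_\A$ follows from a nerve-type argument: for each open cell $c$ of $|\A^{[n]}|$ the fiber $f_\A^{-1}(c)$ is the closed simplex $|\Delta_{I_c}|$, where $I_c = \{i \in [n] \mid c \subset |A_i|\}$, hence contractible. Since $f_\A$ is a cellular map with contractible fibers between finite CW complexes, Whitehead's theorem upgrades this to an honest homotopy equivalence, and the construction is manifestly diagram preserving.

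Second, I build a diagram preserving section of $f_\A$ that lands in the $(k+2)$-truncated hocolimit. Since $|\A^{[n]}|$ has topological dimension at most $k$, the Lebesgue covering dimension theorem furnishes a partition of unity $\{\rho_i\}_{i \in [n]}$ on $|\A^{[n]}|$ subordinate to the closed cover $\{|A_i|\}_{i \in [n]}$ of pointwise multiplicity at most $k+1$. Setting
\[
\psi_\A(x) = \Bigl(\sum_{i \in [n]} \rho_i(x)\, v_i,\; x\Bigr),
\]
with $v_i$ the vertex of $\Delta_{[n]}$ labelled by $i$, defines a continuous diagram preserving section $\psi_\A : |\A^{[n]}| \to |\hocolimit_k(\A)| \subset |\hocolimit_{k+2}(\A)|$ of $f_\A$; define $\psi_\B$ analogously. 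Writing $\phi^{-1}$ for a diagram preserving homotopy inverse of $\phi$, the candidate diagram preserving maps are
\[
\tilde{\phi} = f_\B \circ \phi \circ \psi_\A,
\qquad
\tilde{\phi}^{-1} = f_\A \circ \phi^{-1} \circ \psi_\B.
\]

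The main obstacle is verifying that $\tilde{\phi}$ and $\tilde{\phi}^{-1}$ are diagram homotopy inverse to one another. Unwinding $\tilde{\phi}^{-1} \circ \tilde{\phi}$ reduces the problem to showing that $\psi_\B \circ f_\B$, restricted to $|\hocolimit_{k+2}(\B)|$, is diagram homotopic, \emph{through maps valued in} $|\hocolimit_{k+2}(\B)|$, to the inclusion $|\hocolimit_k(\B)| \hookrightarrow |\hocolimit_{k+2}(\B)|$. The naive straight-line homotopy inside each simplex $|\Delta_{I(y)}|$ fails because the combined support of $s$ and of $\psi_\B(y)$ in the $\Delta$-coordinate can involve up to $2(k+1)$ vertices, and thereby exceed $k+3$. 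The slack afforded by truncating at $k+2$ rather than at $k$ (the level that sufficed in the stable analog of Theorem \ref{the:union_stable}) is precisely what absorbs this: the $(k+1)$-connectedness of $\mathrm{sk}_{k+2}(|\Delta_{I(y)}|)$ combined with the bound $\dim |\B^{[n]}| \leq k$ causes the obstructions to selecting, continuously in $y$, a combinatorial path from $s$ to $\psi_\B(y)$ inside that skeleton to vanish by a standard cellular extension argument. Once such a diagram homotopy is in hand, the relation $\phi^{-1} \circ \phi \simeq \mathrm{id}$ at the truncated level propagates to $\tilde{\phi}^{-1} \circ \tilde{\phi} \simeq \mathrm{id}_{|\A^{[n]}|}$, and the symmetric argument handles the reverse composition, completing the proof.
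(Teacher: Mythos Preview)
Your overall architecture matches the paper's: factor through the truncated homotopy colimits and use that $f_\A$, $f_\B$ are diagram-preserving homotopy equivalences. But two of your steps do not go through as written.

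\medskip

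\textbf{The section $\psi_\A$ does not exist.} A partition of unity $\{\rho_i\}$ with $\mathrm{supp}(\rho_i)\subset |A_i|$ need not exist when the $|A_i|$ form a \emph{closed} cover, and Lebesgue covering dimension says nothing here. Take $n=2$, $|A_1|=[0,1]$, $|A_2|=[1,2]$ in $\R$: for $x<1$ one is forced to have $\rho_1(x)=1$, for $x>1$ one is forced to have $\rho_2(x)=1$, and continuity at $x=1$ is impossible. Equivalently, $f_\A$ has no continuous section in this example. The paper does not look for a section: Lemma~\ref{lem:hom1} produces a cellular diagram-preserving \emph{homotopy inverse} $h_\A:|\A^{[n]}|\to|\hocolimit(\A)|$, and cellularity together with $\dim|\A^{[n]}|\le k$ forces the image into $\hocolimit_{k+2}(\A)$. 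That is the correct replacement for your $\psi_\A$.

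\medskip

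\textbf{The homotopy at the end is muddled, and the paper bypasses it.} Your key claim --- that ``$\psi_\B\circ f_\B$, restricted to $|\hocolimit_{k+2}(\B)|$, is diagram homotopic $\ldots$ to the inclusion $|\hocolimit_k(\B)|\hookrightarrow|\hocolimit_{k+2}(\B)|$'' --- compares two maps with different domains, and the obstruction count you sketch (invoking $\dim|\B^{[n]}|\le k$) does not match the actual domain of the required homotopy. Moreover the ``fibres'' $|\mathrm{sk}_{k+2}(\Delta_{I(y)})|$ are not the fibres of a fibration, so ``standard cellular extension'' needs real justification. The paper sidesteps all of this: it simply observes that the inclusion $i_{\B,k+2}:|\hocolimit_{k+2}(\B)|\hookrightarrow|\hocolimit(\B)|$ is a $(k+1)$-equivalence, so the composite
\[
f_\B\circ i_{\B,k+2}\circ h_{\A,\B}\circ h_\A:\;|\A^{[n]}|\longrightarrow|\B^{[n]}|
\]
is a diagram-preserving $(k+1)$-equivalence between complexes of dimension at most $k$, and then invokes the Whitehead-type Proposition~\ref{prop:homotopy} to conclude it is a homotopy equivalence. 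No explicit inverse or homotopy is ever constructed.
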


We now state two corollaries  of Theorems \ref{the:union_stable}
and \ref{the:union_ordinary} which might be of interest.

Given a Boolean formula $\theta(T_1,\ldots,T_n)$ containing no negations
and a family of sub-complexes
${\mathcal A} = \{A_1,\ldots,A_n\}$ of a finite regular cell complex, 
we will denote by
${\mathcal A}_{\theta}$ the sub-complex defined by the formula,
$\theta_{\mathcal A}$, which is obtained from $\theta$ by replacing
in $\theta$ the atom $T_i$ by $A_i$ 
for each $i \in [n]$, and 
replacing each $\wedge$ (respectively $\vee$) by $\cap$ (respectively $\cup$).

\begin{corollary}
\label{cor:closed_stable}
Let ${\mathcal A} = \{A_1,\ldots,A_n\}, 
{\mathcal B} = \{B_1,\ldots,B_n\}$ be two families of sub-complexes   
of a finite regular cell complex, satisfying the same conditions
as in Theorem \ref{the:union_stable}.
Let $\theta(T_1,\ldots,T_n)$ be a Boolean formula without negations.
Then, 
$|\A_\theta|$ and $|\B_\theta|$ are stable homotopy equivalent.
\end{corollary}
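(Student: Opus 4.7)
The plan is to apply Theorem \ref{the:union_stable} directly to $\A$ and $\B$ to produce a diagram preserving stable homotopy equivalence between $|\A^{[n]}|$ and $|\B^{[n]}|$, and then simply restrict this equivalence to the sub-complexes $|\A_\theta|$ and $|\B_\theta|$. Since $\theta$ contains no negations, putting it in disjunctive normal form yields $\theta = \bigvee_{j=1}^{r} \bigwedge_{i \in I_j} T_i$, so that
\[
|\A_\theta| = \bigcup_{j=1}^{r} |\A_{I_j}|, \qquad |\B_\theta| = \bigcup_{j=1}^{r} |\B_{I_j}|,
\]
which expresses each of $|\A_\theta|$ and $|\B_\theta|$ as a union of intersections of the $A_i$'s and $B_i$'s respectively. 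Note also that each $|\A_{I_j}|$, and hence $|\A_\theta|$, is a sub-complex of the ambient regular cell complex, and similarly on the $\B$ side.

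The hypotheses of Corollary \ref{cor:closed_stable} are precisely those of Theorem \ref{the:union_stable}, so the theorem produces an integer $N \geq 0$ together with maps
\[
\tilde{f}: \Suspension^N |\A^{[n]}| \to \Suspension^N |\B^{[n]}|, \qquad
\tilde{g}: \Suspension^N |\B^{[n]}| \to \Suspension^N |\A^{[n]}|
\]
representing mutually inverse diagram preserving stable homotopy equivalences, and diagram preserving homotopies from $\tilde{g}\circ\tilde{f}$ and $\tilde{f}\circ\tilde{g}$ to the respective identities. By the diagram preserving property, $\tilde{f}\bigl(\Suspension^N |\A_{I_j}|\bigr) \subset \Suspension^N |\B_{I_j}|$ for each $j$, and taking the union over $j$ gives $\tilde{f}\bigl(\Suspension^N |\A_\theta|\bigr) \subset \Suspension^N |\B_\theta|$. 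The analogous inclusion holds for $\tilde{g}$. Moreover, since every time-slice of the two diagram preserving homotopies preserves each intersection $\A_I$ (respectively $\B_I$), each time-slice also preserves the unions $\Suspension^N |\A_\theta|$ and $\Suspension^N |\B_\theta|$, so the homotopies restrict to homotopies on these sub-complexes exhibiting the restricted compositions as homotopic to the appropriate identities.

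Combining these observations, the restrictions of $\tilde{f}$ and $\tilde{g}$ exhibit $|\A_\theta|$ and $|\B_\theta|$ as stable homotopy equivalent. I do not foresee a serious obstacle here: the argument reduces to the essentially formal observation that a map preserving every intersection $\A_I$ also preserves every positive Boolean combination of the $A_i$'s, and that a diagram preserving homotopy restricts to a homotopy on any such sub-complex. The real work is done by Theorem \ref{the:union_stable}, which supplies the stable homotopy equivalence at the level of the full union.
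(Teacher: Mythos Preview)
Your proof is correct but takes a different route from the paper's. The paper forms new families $\A' = \{\A_I \mid I \in \Sigma\}$ and $\B' = \{\B_I \mid I \in \Sigma\}$ (where $\Sigma \subset 2^{[n]}$ is the set of conjuncts in the disjunctive normal form of $\theta$), asserts that the hypothesis $\A \sim_k \B$ implies $\A' \sim_k \B'$, and then reapplies Theorem~\ref{the:union_stable} to these auxiliary families; since $|\A_\theta|$ and $|\B_\theta|$ are precisely the full unions of $\A'$ and $\B'$, the conclusion follows. Your approach instead applies Theorem~\ref{the:union_stable} only once, to the original families $\A$ and $\B$, and then restricts the resulting diagram preserving stable homotopy equivalence (and the diagram preserving homotopies) to the sub-complexes $\Suspension^N|\A_\theta|$ and $\Suspension^N|\B_\theta|$. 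Your argument is more elementary in that it sidesteps the need to verify the hypotheses of Theorem~\ref{the:union_stable} for the new families---in particular you avoid the step $\A \sim_k \B \Rightarrow \A' \sim_k \B'$, which the paper states without justification and which does require passing from a diagram preserving equivalence of $\hocolimit_k(\A)$ to one of $\hocolimit_k(\A')$. Conversely, the paper's route gives the marginally stronger conclusion that $\A'$ and $\B'$ are stable homotopy equivalent \emph{as arrangements}, whereas your restriction argument yields only the stable homotopy equivalence of the spaces $|\A_\theta|$ and $|\B_\theta|$, which is exactly what the corollary asserts.
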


\begin{corollary}
\label{cor:closed_ordinary}
Let ${\mathcal A} = \{A_1,\ldots,A_n\}, 
{\mathcal B} = \{B_1,\ldots,B_n\}$ be two families of sub-complexes   
of a finite regular cell complex, satisfying the same conditions
as in Theorem \ref{the:union_ordinary}.
Let $\theta(T_1,\ldots,T_n)$ be a Boolean formula without negations.
Then,
$|\A_\theta|$ and $|\B_\theta|$ are homotopy equivalent.
\end{corollary}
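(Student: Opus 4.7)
The plan is to deduce Corollary \ref{cor:closed_ordinary} directly from Theorem \ref{the:union_ordinary} by restricting the diagram preserving homotopy equivalence it provides between $|\A^{[n]}|$ and $|\B^{[n]}|$ to the subcomplexes $|\A_\theta|$ and $|\B_\theta|$. The hypothesis that $\theta$ contains no negations is the crucial structural input: it is precisely what allows $|\A_\theta|$ to be expressed as a union of subcomplexes of the form $|\A_I|$, and these are exactly the pieces that a diagram preserving map is forced to respect.

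First, I would apply Theorem \ref{the:union_ordinary} to obtain a diagram preserving homotopy equivalence $\Phi : |\A^{[n]}| \rightarrow |\B^{[n]}|$ together with a diagram preserving homotopy inverse $\Psi : |\B^{[n]}| \rightarrow |\A^{[n]}|$, plus diagram homotopies $H$ from $\Psi \circ \Phi$ to $\mathrm{Id}_{|\A^{[n]}|}$ and $K$ from $\Phi \circ \Psi$ to $\mathrm{Id}_{|\B^{[n]}|}$. Next, putting $\theta$ in disjunctive normal form,
\[
\theta \;=\; \bigvee_{\alpha \in \mathcal{J}} \bigwedge_{i \in J_\alpha} T_i,
\]
for some collection of subsets $J_\alpha \subset [n]$, one has $|\A_\theta| = \bigcup_{\alpha} |\A_{J_\alpha}|$ and, identically, $|\B_\theta| = \bigcup_{\alpha} |\B_{J_\alpha}|$. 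Diagram preservation gives $\Phi(|\A_{J_\alpha}|) \subset |\B_{J_\alpha}|$ for every $\alpha$, whence $\Phi(|\A_\theta|) \subset |\B_\theta|$; symmetrically, $\Psi(|\B_\theta|) \subset |\A_\theta|$.

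It remains to restrict the homotopies. Since each time slice $H(\cdot, t)$ is itself diagram preserving as a self-map of $|\A^{[n]}|$, it carries $|\A_{J_\alpha}|$ into $|\A_{J_\alpha}|$ for every $\alpha$, and therefore carries $|\A_\theta|$ into $|\A_\theta|$; so $H$ restricts to a homotopy inside $|\A_\theta|$ between $\Psi|_{|\B_\theta|} \circ \Phi|_{|\A_\theta|}$ and $\mathrm{Id}_{|\A_\theta|}$. The same argument applies to $K$. This exhibits $\Phi|_{|\A_\theta|}$ as a homotopy equivalence $|\A_\theta| \to |\B_\theta|$ with homotopy inverse $\Psi|_{|\B_\theta|}$, proving the corollary.

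There is essentially no obstacle beyond Theorem \ref{the:union_ordinary} itself: the whole content of the corollary is the bookkeeping observation that diagram preservation, together with the DNF representation of $\theta$, forces $\Phi$, $\Psi$, and both homotopies to restrict simultaneously to the $\theta$-defined subcomplexes. Were $\theta$ allowed to contain negations, $|\A_\theta|$ could no longer be written as a union of the $|\A_{J_\alpha}|$ and the restriction argument would collapse; this is why the no-negations hypothesis is an essential part of the statement.
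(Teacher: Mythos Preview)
Your argument is correct, but it takes a different route from the paper's. The paper (following its proof of Corollary~\ref{cor:closed_stable}) writes $\theta$ in disjunctive normal form to obtain $\Sigma \subset 2^{[n]}$ with $\A_\theta = \bigcup_{I \in \Sigma} \A_I$, then forms the \emph{derived} arrangements $\A' = \{\A_I \mid I \in \Sigma\}$ and $\B' = \{\B_I \mid I \in \Sigma\}$, asserts that the hypothesis $\A \approx_{k+2} \B$ implies $\A' \approx_{k+2} \B'$, and finally re-applies Theorem~\ref{the:union_ordinary} to the pair $(\A',\B')$. You instead apply Theorem~\ref{the:union_ordinary} once to the original pair $(\A,\B)$ and then restrict the resulting diagram preserving homotopy equivalence (and its inverse and the two homotopies) to $|\A_\theta|$ and $|\B_\theta|$, using that diagram preservation forces each of these maps to respect every $|\A_I|$ and hence their union $|\A_\theta|$. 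Your route is arguably more transparent: it avoids the step of verifying that $\A' \approx_{k+2} \B'$ (which the paper only asserts), and it makes explicit why the full strength of the conclusion of Theorem~\ref{the:union_ordinary}---namely that the equivalence and the homotopies are all diagram preserving in the sense of Definition~\ref{def:maps}---is exactly what is needed. The paper's route, on the other hand, yields the slightly stronger conclusion that $\A'$ and $\B'$ are homotopy equivalent \emph{as arrangements}, not merely that $|\A_\theta|$ and $|\B_\theta|$ are homotopy equivalent as spaces.
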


\subsection{Proofs of Theorems \ref{the:union_stable} and \ref{the:union_ordinary}}
\label{sec:proof}
Let $\A$ and $\B$ as in Theorem \ref{the:union_stable}.

We need a preliminary lemma.

\begin{lemma}
\label{lem:hom1}
\[
|{\mathcal A}^{[n]}| \;\;\mbox{is diagram preserving homotopy equivalent to}\;\; 
|\hocolimit(\A)|.
\]
\end{lemma}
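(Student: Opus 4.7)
My plan is to take the natural projection map
\[
f_{\A}: |\hocolimit(\A)| \to |\A^{[n]}|, \qquad (\s,\x) \mapsto \x,
\]
as the desired diagram preserving homotopy equivalence. (The displayed formulas (\ref{eqn:f_A}) and (\ref{eqn:g_A}) appear to have $\s$ and $\x$ interchanged in the target assignment; I take $f_{\A}$ to be projection onto the cell-complex coordinate, which is the intended map.) Since the sub-complex of $|\hocolimit(\A)|$ associated to $I$ is defined in Definition \ref{def:dgpheofhocolimits} as $f_{\A}^{-1}(|\A_I|)$, the map $f_{\A}$ is diagram preserving by construction; all the work is in producing a diagram preserving homotopy inverse.

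I will proceed by induction on $n$. The base case $n=1$ is immediate, since $|\hocolimit(\A)| = |\Delta_{\{1\}}| \times |A_1| \cong |A_1|$ and $f_{\A}$ is a homeomorphism. For the inductive step, set $\A' = \{A_1,\ldots,A_{n-1}\}$ and $\A'' = \{A_1 \cap A_n,\ldots,A_{n-1} \cap A_n\}$, the latter being a family of sub-complexes of the regular cell complex $A_n$. The target decomposes as a pushout of closed cofibrations
\[
|\A^{[n]}| \;=\; |\A'^{[n-1]}| \cup_{|\A''^{[n-1]}|} |A_n|,
\]
since $|\A''^{[n-1]}| = |\A'^{[n-1]}| \cap |A_n|$. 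On the homotopy colimit side I partition the index sets $I \subset [n]$ according to whether $n \in I$: the pieces with $n \notin I$ assemble to $|\hocolimit(\A')|$, while the pieces with $n \in I$ assemble to a sub-complex $C$ which, using the cone structure of each $|\Delta_{J \cup \{n\}}|$ at the vertex $v_n$, admits a canonical diagram preserving deformation retraction onto $|A_n|$; these two sub-complexes are glued along $|\hocolimit(\A'')|$. Invoking the inductive hypothesis for $\A'$ and $\A''$, together with the retraction of $C$ onto $|A_n|$, the gluing lemma for cofibrations produces the required diagram preserving homotopy equivalence $|\hocolimit(\A)| \simeq |\A^{[n]}|$, compatible with $f_{\A}$.

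The principal obstacle is that the standard gluing lemma only yields a bare homotopy equivalence of pushouts, whereas I need the resulting map, its inverse, and the homotopies relating the compositions to the identity to all be diagram preserving in the sense of Definition \ref{def:dgpheofhocolimits}. The crucial bookkeeping is: an index set $I \subset [n]$ with $n \notin I$ is tracked inside the $\A'$-piece with its label $I$ unchanged, whereas $I = J \cup \{n\}$ is tracked inside the cone piece $C$ via the sub-family $\A''$ indexed by $J \subset [n-1]$. Verifying that the inductive equivalences and the deformation retraction of $C$ onto $|A_n|$ preserve this labeling at every time --- i.e.\ that they send $f_{\A}^{-1}(|\A_I|)$ into $|\A_I|$ for every $I$ --- is the main technical step, and it works because both pushout decompositions respect the index-set stratification compatibly with $f_{\A}$.
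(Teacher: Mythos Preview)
Your approach is correct but genuinely different from the paper's. The paper does not induct on $n$ or use a pushout decomposition at all; instead it observes that for any point $\x$ lying in the interior of a cell $c$ of $\A^{[n]}$, the fiber $f_{\A}^{-1}(\x)$ equals the simplex $|\Delta_{I_c}|$ (where $I_c = \{i : c \subset A_i\}$), hence is contractible, and then invokes Smale's version of the Vietoris--Begle theorem \cite{Smale} to conclude that $f_{\A}$ is a homotopy equivalence. For the diagram-preserving inverse the paper appeals to the explicit construction in the proof of Smale's Theorem~6, which produces a cellular section $h_{\A}$; cellularity is what makes $h_{\A}$ land in the correct strata.

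What each approach buys: the paper's argument is a one-line application of a black box and is insensitive to $n$, but the claim that Smale's inverse is diagram preserving is left to the reader to extract from \cite{Smale}. Your inductive pushout argument is longer but more self-contained: the diagram-preserving property is carried explicitly through the gluing lemma, since your decomposition into the $\A'$-piece, the cone piece $C$, and their overlap $|\hocolimit(\A'')|$ is itself a stratified decomposition (each piece is a union of preimages $f_{\A}^{-1}(|\A_I|)$), and the inductive equivalences together with the straight-line retraction of $C$ are manifestly stratum-preserving. The one point you should make fully explicit is that the standard proof of the gluing lemma for cofibrations (e.g., via mapping cylinders or the explicit homotopies in tom Dieck or May) respects any stratification that is compatible with the pushout squares, so that the inverse and the homotopies it produces are themselves diagram preserving; once that is said, your argument is complete.
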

\begin{proof}
Consider the map
\[
f_\A: |\hocolimit(\A)| \rightarrow | \A^{[n]}|
\]
defined in \eqref{eqn:f_A}.

Clearly, if $\x \in c$, $f_\A^{-1}(c) = |{\Delta}_{I_c}|$. Now applying 
Smale's version of the Vietoris-Begle Theorem \cite{Smale}
we obtain that $f_\A$ is a homotopy equivalence. Clearly, $f_\A$ is
diagram preserving. Moreover,
(see for instance the proof of Theorem 6 in \cite{Smale}) 
there exists an cellular inverse map
\[
h_\A: | \A^{[n]}| \rightarrow |\hocolimit(\A)|
\]  
such that $f_\A \circ h_\A$ is diagram preserving,
and is a homotopy inverse of $f_\A$.
\end{proof}

We can now prove Theorems \ref{the:union_stable} and
\ref{the:union_ordinary}.
\begin{proof}[Proof of Theorem  \ref{the:union_stable}]
Let
$h_{\A}: |{\A}^{[n]}| \rightarrow  |\hocolimit(\A)|$ 
be a diagram preserving 
homotopy equivalence known to exist by Lemma \ref{lem:hom1}. 
Since $h_\A$ is cellular, 
and $\dim |\A^{[n]}| \leq k $,
its image is contained in
$\hocolimit_k(\A)$ since 
by definition (Eqn. (\ref{eqn:truncatedhomotopycolimit}))
\[
{\rm sk}_k (\hocolimit(\A)) \subset \hocolimit_k(\A).
\]

We will denote by 
$h_{\A,\B}: \Suspension^N |\hocolimit_k (\A)| \rightarrow \Suspension^N
|\hocolimit_k(\B)|$ 
a map representing a
diagram preserving stable homotopy equivalence known to exist by hypothesis 
(which we assume to be cellular).

Let $i_{\B,k}: \Suspension^N |\hocolimit_k(\B)| \hookrightarrow \Suspension^N |\hocolimit(\B)|$ denote
the inclusion map. 
The map $i_{\B,k}$ induces isomorphisms
\[
(i_{\B,k})_*: \HH_j(\hocolimit_k(\B),\Z) \rightarrow \HH_j(\hocolimit(\B),\Z)
\]
for $0 \leq j \leq k-1$. 

Consequently, the map
$f_{\B}\circ i_{\B,k}$ induces isomorphisms
\[
(f_{\B}\circ i_{\B,k})_*: \HH_j(\hocolimit_k(\B),\Z) \rightarrow
\HH_j(\B^{[n]},\Z) 
\]
for $0 \leq j \leq  k-1$.

Composing the maps, $\Suspension^N h_{\A}, h_{\A\B}, i_{\B,k}, 
\Suspension^N f_{\B}$ we obtain that
the map,
\[
\Suspension^N f_{\B} \circ i_{\B,k} \circ h_{\A\B} \circ \Suspension^N h_{\A}:
\Suspension^N | \A^{[n]}| \rightarrow \Suspension^N |\B^{[n]}|
\]
induces isomorphisms 
\[
(\Suspension^N f_{\B} \circ i_{\B,k} \circ h_{\A,\B,k} \circ 
\Suspension^N h_{\A})_*:\HH_j( |\A^{[n]}|,\Z) \rightarrow \HH_j(|\B^{[n]}|,\Z)
\]
for all $j \geq 0$.

Moreover, the map 
$\Suspension^N f_{\B} \circ i_{\B,k} \circ h_{\A\B} \circ \Suspension^N h_{\A}$
is diagram preserving since each constituent of the composition is
diagram preserving.
It now follows from Theorem  \ref{the:stable}
that the 
S-map represented by
\[
\phi = \Suspension^N f_{\B} \circ i_{\B,k} \circ h_{\A\B} \circ \Suspension^N
h_{\A}:
\Suspension^N | \A^{[n]}| \rightarrow \Suspension^N |\B^{[n]}|,
\]
is a diagram preserving stable homotopy equivalence.
\end{proof}

Before proving Theorem \ref{the:union_ordinary} we first need to recall
a few basic facts from homotopy theory.

\begin{definition}[$k$-equivalence]
\label{def:k-equivalence}
A map $f: X \rightarrow Y$ between two regular cell complex is called a 
$k$-equivalence if the induced homomorphism 
\[
f_*: \pi_i(X) \rightarrow \pi_i(Y)
\]
is an isomorphism for all $0 \leq i < k$, and an epimorphism for $i=k$,
and we say that $X$ is $k$-equivalent to $Y$. (Note that $k$-equivalence
is not an equivalence relation).
\end{definition}

We also need the following well-known fact from algebraic topology.
\begin{proposition}
\label{prop:homotopy}
Let $X,Y$ be finite regular cell complexes with 
\[
\dim(X) < k, \dim(Y) \leq k,
\] 
and $f: X \rightarrow Y$ 
a $k$-equivalence. Then, $f$ is a homotopy equivalence between
$X$ and $Y$. 
\end{proposition}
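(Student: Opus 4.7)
My plan is to prove this via the standard Whitehead-type construction of a homotopy inverse using obstruction theory on the cellular structure. The workhorse is the following consequence of the cellular approximation theorem applied to the mapping cylinder pair $(M_f, X)$: if $f : X' \to Y'$ is an $n$-equivalence between CW complexes and $Z$ is any CW complex, then the induced map $f_* : [Z, X'] \to [Z, Y']$ is surjective when $\dim Z \leq n$ and bijective when $\dim Z < n$. This is classical (see e.g.\ Whitehead's \emph{Elements of Homotopy Theory}, or Hatcher, Chapter 4) and follows by successively extending maps and homotopies one cell at a time, with the relevant obstructions lying in the groups $\pi_i(M_f, X) = 0$ for $i \leq n$.

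With this tool the argument is a short two-step construction of a homotopy inverse. First, since $\dim Y \leq k$, the surjectivity clause applied to $Z = Y$ and the $k$-equivalence $f$ produces a map $g : Y \to X$ with $f \circ g \simeq \mathrm{id}_Y$. Second, since $\dim X < k$, the bijectivity clause applied to $Z = X$ shows that $f_* : [X, X] \to [X, Y]$ is a bijection. Now observe that
\[
f \circ (g \circ f) = (f \circ g) \circ f \simeq \mathrm{id}_Y \circ f = f = f \circ \mathrm{id}_X,
\]
so the two classes $[g \circ f], [\mathrm{id}_X] \in [X,X]$ have the same image under $f_*$. Injectivity then forces $g \circ f \simeq \mathrm{id}_X$, whence $g$ is a two-sided homotopy inverse to $f$.

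The main obstacle, and really the only nontrivial ingredient, is the compression/extension statement about $f_*:[Z,X']\to[Z,Y']$ quoted above; I would not reprove it but simply cite a standard reference. Apart from that, both hypotheses on the dimensions are used in the tightest possible way: $\dim Y \leq k$ is exactly what is needed to extract a right homotopy inverse, and the strict inequality $\dim X < k$ is exactly what promotes this right inverse to a two-sided inverse. Finite regularity of the complexes plays no essential role beyond ensuring that $f$ can be taken cellular, so the proof is purely homotopy-theoretic and goes through without reference to the o-minimal framework of the earlier sections.
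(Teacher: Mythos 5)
Your argument is correct. The paper itself gives no proof of this proposition, citing only Viro et al.\ (p.~69) for the standard fact; your two-step construction of a homotopy inverse via the compression/extension lemma (that an $n$-equivalence $f$ induces a surjection $f_*:[Z,X]\to[Z,Y]$ when $\dim Z\le n$ and a bijection when $\dim Z<n$) is exactly the classical textbook argument lying behind that citation, and your use of $\dim Y\le k$ for the right inverse and $\dim X<k$ to upgrade it to a two-sided inverse is precisely where each hypothesis is needed.
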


\begin{proof}
See \cite[pp. 69]{Viro}.
\end{proof}

\begin{proof}[Proof of Theorem  \ref{the:union_ordinary}]
The proof is along the same lines as that of the proof of
Theorem \ref{the:union_stable}.
Let
$h_{\A}: |{\A}^{[n]}| \rightarrow |\hocolimit(\A)|$ 
be a diagram preserving 
homotopy equivalence known to exist by Lemma \ref{lem:hom1}. 
By the same argument as before, 
its image is contained in
$|\hocolimit_{k+2}(\A)|$.

We will denote by 
$h_{\A,\B}: |\hocolimit_{k+2} (\A)| \rightarrow |\hocolimit_{k+2}(\B)|$ 
a diagram preserving homotopy equivalence known to exist by hypothesis. 

Let $i_{\B,k+2}: |\hocolimit_{k+2}(\B)| \hookrightarrow |\hocolimit(\B)|$ denote
the inclusion map. 
The map $i_{\B,k+2}$ induces isomorphisms
\[
(i_{\B,k+2})_*: \pi_j(\hocolimit_{k+2}(\B)) \rightarrow \pi_j(\hocolimit(\B))
\]
for $0 \leq j \leq k+1$.
This is a consequence
of the exactness of the homotopy sequence of the pair 
$(\hocolimit(\B),\hocolimit_{k+2}(\B))$ (see \cite{Spanier}).

Consequently, the map
$f_{\B}\circ i_{\B,k}$ induces isomorphisms
\[
(g_{\B}\circ i_{\B,k})_*: \pi_j(\hocolimit_{k+2}(\B)) \rightarrow
\pi_j(\B^{[n]})
\]
for $0 \leq j \leq  k+1$.

Composing the maps, $h_{\A}, h_{\A\B}, i_{\B,k+2}, f_{\B}$ we obtain that
the map
\[
f_{\B} \circ i_{\B,k} \circ h_{\A\B} \circ h_{\A}:
| \A^{[n]}| \rightarrow |\B^{[n]}|
\]
induces isomorphisms 
\[
(f_{\B} \circ i_{\B,k} \circ h_{\A,\B,k} \circ h_{\A})_*:\pi_j(\A^{[n]}) \rightarrow \pi_j(\B^{[n]})
\] 
for $0 \leq j \leq k+1$.

Moreover, the map 
$f_{\B} \circ i_{\B,k} \circ h_{\A\B} \circ h_{\A}$
is diagram preserving since each constituent of the composition is
diagram preserving.
It now follows from Proposition  \ref{prop:homotopy}
that the map 
\[
\phi = f_{\B} \circ i_{\B,k} \circ h_{\A\B} \circ h_{\A}:
| \A^{[n]}| \rightarrow |\B^{[n]}|
\]
is a diagram preserving homotopy equivalence.
\end{proof}

\begin{proof}[Proof of Corollary  \ref{cor:closed_stable}]
First note that since the formula $\theta$ does not contain negations,
writing $\theta$ as a disjunction of conjunctions,  
there exists $\Sigma \subset 2^{[n]}$ such that
$
\displaystyle{
\A_\theta = \bigcup_{I \in \Sigma} \A_I
}
$
(respectively, 
$
\displaystyle{
\B_\theta = \bigcup_{I \in \Sigma} \B_I
}
$).
Let $\A' = \{\A_I \mid I \in \Sigma \}$ (respectively,
$\B' = \{\B_I \mid I \in \Sigma \}$).
It follows from the hypothesis that 
\[
\A' \sim_k \B'.
\]
Now apply Theorem \ref{the:union_stable}.
\end{proof}

\begin{proof}[Proof of Corollary  \ref{cor:closed_ordinary}]
The proof is similar to that of Corollary \ref{cor:closed_stable}
using Theorem \ref{the:union_ordinary} in place of 
Theorem \ref{the:union_stable} and is omitted.
\end{proof}

\section{Proofs of the Main Theorems}
\subsection{Summary of the main ideas}
We first  summarize the main ideas underlying the proof of
Theorem \ref{the:homotopy_closed_stable}. The proof of
Theorem \ref{the:homotopy_closed_ordinary} is similar
and differs only in technical details. 
Let 
$\A = \{A_1,\ldots,\A_n\}$ be a $(T,\pi_1,\pi_2)$- arrangement
in $\R^{k_1+k_2}$. Using Proposition \ref{prop:triangulation}, 
we obtain a definable
partition, $\{C_{\alpha}\}_{\alpha \in I}$ (say) of $\R^{k_2}$,
into connected locally closed definable sets $C_\alpha \subset \R^{k_2}$, 
with the property that as $\z$ varies over $C_\alpha$, 
we get for each $I \subset [n]$ with $\#I \leq k_1+1$ isomorphic (and
continuously varying) triangulations of the sub-arrangement $\A[I]$. 
Moreover, these triangulations are {\em downward compatible} in the sense that 
the restriction to 
$\A[J]$ of the triangulation of $\A[I]$,  refines that of $\A[J]$ for 
each $J \subset I$ (cf. Proposition \ref{prop:triangulation} below). 
These facts allow us to prove
that for any $z_1,z_2 \in C_\alpha$
the truncated homotopy co-limits $|\hocolimit_{k_1}(\A_{z_1})|$ and
$|\hocolimit_{k_1}(\A_{z_2})|$ are homotopy equivalent by a diagram preserving homotopy equivalence. 
More precisely, we first prove that the thickened homotopy co-limits
$|\hocolimit^+_{k_1}(\A_{z_1},\bar\eps)|$ and
$|\hocolimit^+_{k_1}(\A_{z_2},\bar\eps)|$ are homeomorphic, and then 
use Proposition \ref{prop:limit} to deduce that 
$|\hocolimit_{k_1}(\A_{z_1})|$ and
$|\hocolimit_{k_1}(\A_{z_2})|$ are homotopy equivalent.
Theorem \ref{the:union_stable} then implies that
$\A_{z_1}$ is stable homotopy equivalent to $\A_{\z_2}$ by a diagram
preserving stable homotopy equivalence.
It remains to bound the number of elements in the partition
$\{C_{\alpha}\}_{\alpha \in I}$. We use Theorem \ref{the:betti} 
to obtain a bound of  
$C \cdot n^{(k_1+1)k_2}$
on this number, where $C$ is a constant which  depends only on $T$.

In order to prove Theorem \ref{the:homotopy_closed_stable} 
we 
recall a few results from o-minimal geometry.

We first note an elementary property of families of admissible
sets (see \cite{Basu9} for a proof).

\begin{observation}
\label{obs:unionoffamilies}
Suppose that $T_1,\ldots,T_m \subset \R^{k+\ell}$ are 
definable sets, $\pi_1: \R^{k+\ell} \rightarrow \R^{k}$ and
$\pi_2: \R^{k+\ell} \rightarrow \R^{\ell}$ the two projections.
Then, there exists a definable 
subset $T' \subset \R^{k+\ell+m}$ 
depending only on $T_1,\ldots,T_m$, such that for any collection of 
$(T_i,\pi_1,\pi_2)$ families $\A_i$, $1 \leq i \leq m$,
the union
$
\displaystyle{
\bigcup_{i=1}^{m} \A_i
}
$ 
is a $(T',\pi_1',\pi_2')$-family, where 
$\pi_1': \R^{k+m+\ell} \rightarrow \R^{k}$ and
$\pi_2': \R^{k+\ell+m} \rightarrow \R^{\ell+m}$ are the 
projections onto the first $k$, and the last $\ell + m$ co-ordinates
respectively.
\end{observation}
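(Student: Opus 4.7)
The plan is to construct the witness set $T'$ by augmenting each $T_i$ with an extra $m$-tuple of coordinates that acts as an ``index tag'' singling out which family $\A_i$ a given member comes from. The extra $\ell+m - \ell = m$ parameters in $\pi_2'$ then encode both the original parameter $\y \in \R^\ell$ and the choice of index $i \in \{1,\dots,m\}$.

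Concretely, let $e_1,\dots,e_m$ denote the standard basis vectors of $\R^m$, and write points of $\R^{k+\ell+m}$ as $(\x,\y,\w)$ with $\x\in\R^k$, $\y\in\R^\ell$, $\w\in\R^m$. I would set
\[
T' \;=\; \bigcup_{i=1}^{m}\bigl\{(\x,\y,e_i)\,\bigm|\,(\x,\y)\in T_i\bigr\} \;\subset\; \R^{k+\ell+m}.
\]
This set depends only on the $T_i$'s and is definable as a finite union of definable sets (each summand is the image of $T_i$ under the affine definable embedding $(\x,\y)\mapsto(\x,\y,e_i)$). With $\pi_1'$ projecting onto the first $k$ coordinates and $\pi_2'$ projecting onto the last $\ell+m$ coordinates, a routine computation shows that for every $(\y,e_i)\in\R^{\ell+m}$,
\[
\pi_1'\bigl((\pi_2')^{-1}(\y,e_i)\cap T'\bigr) \;=\; \pi_1\bigl(\pi_2^{-1}(\y)\cap T_i\bigr) \;=\; (T_i)_{\y},
\]
while for $(\y,\w)\in\R^{\ell+m}$ with $\w\notin\{e_1,\dots,e_m\}$ the fiber is empty. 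Hence every $(T_i,\pi_1,\pi_2)$-set is a $(T',\pi_1',\pi_2')$-set, and taking the union $\bigcup_{i=1}^m \A_i$ therefore yields a single $(T',\pi_1',\pi_2')$-family, as required.

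There is no real obstacle here: the only thing to be careful about is the bookkeeping of coordinate orderings (matching $(\x,\y,\w)\in\R^{k+\ell+m}$ with the stated $\pi_1'$ and $\pi_2'$) and the observation that since $T'$ is defined once and for all from the $T_i$'s, the family $\A_i$ need not be fixed in advance — the same $T'$ works uniformly for every choice of $(T_i,\pi_1,\pi_2)$-families.
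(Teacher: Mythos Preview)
Your construction is correct and is the natural ``tagged disjoint union'' argument: the extra $m$ coordinates serve only to select an index $i$, and the verification that $(T_i)_\y = T'_{(\y,e_i)}$ is exactly as you wrote. The paper itself does not prove this observation but defers to \cite{Basu9}; your argument is the standard one and is in all likelihood what appears there (or a trivial variant).
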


\subsection{Hardt's Triviality for Definable Sets}
One important technical tool will be
the following o-minimal version of Hardt's triviality theorem.

Let $X \subset \R^k \times \R^\ell$ and $A \subset \R^k$ be
definable subsets  of $\R^k \times \R^\ell$ and $\R^\ell$ 
respectively,
and let $\pi: X \rightarrow \R^\ell$
denote the projection map on the last $\ell$ co-ordinates.

We say that {\em $X$ is definably trivial over $A$} 
if there exists a definable
set $F$ and a definable homeomorphism 
\[
h: F \times A  \rightarrow X \cap \pi^{-1}(A), 
\]
such that the following diagram commutes.
\[
\begin{diagram}
\node{F \times A}\arrow{e,t}{h}\arrow{s,t}{\pi_2}
\node{X \cap \pi^{-1}(A)}\arrow{sw,t}{\pi} \\
\node{A}
\end{diagram}
\]
In the diagram above  $\pi_2: F \times A \rightarrow A$ is the projection 
onto the second factor. We call $h$ 
{\em a definable trivialization of $X$ over $A$}.

If $Y$ is a definable subset of $X$, we say that the trivialization $h$ is
{\em compatible} with $Y$ if there is a definable subset $G$ of $F$ such
that $h(G \times A) = Y \cap \pi^{-1}(A)$. Clearly, the restriction of
$h$ to $G \times A$ is a trivialization of $Y$ over $A$.

\begin{theorem}[Hardt's theorem for  definable families]
\label{the:hardt}
Let $X \subset \R^k \times \R^\ell$ be a definable set and let
$Y_1,\ldots,Y_m$ be definable subsets of $X$. Then, there exists a 
finite partition of $\R^\ell$ into definable sets $C_1,\ldots,C_N$
such that $X$ is definably trivial over each $C_i$, and moreover
the trivializations over each $C_i$ are compatible with $Y_1,\ldots,Y_m$.
\end{theorem}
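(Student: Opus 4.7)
The plan is to deduce the theorem from the cylindrical cell decomposition (CCD) theorem for o-minimal structures, following the classical approach of \cite{Dries,Michel2}. The key observation is that a CCD of $\R^{k+\ell}$ adapted to $X, Y_1, \ldots, Y_m$ automatically produces both the required partition of $\R^\ell$ and, via its cylindrical structure, explicit trivializations over each piece.

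First, I would apply the CCD theorem to $\R^{k+\ell}$ to obtain a finite decomposition $\mathcal{D}$ into definable cells, compatible with $X$ and each $Y_j$, and cylindrical with respect to the projection $\pi$ onto the last $\ell$ coordinates. The images of the cells of $\mathcal{D}$ under $\pi$ then form a finite decomposition of $\R^\ell$ into definable cells; these will be taken as $C_1,\ldots,C_N$. By cylindricity, every cell $D \in \mathcal{D}$ lies over exactly one $C_i$.

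Next, I would construct the trivialization over each $C_i$ explicitly. Fix a base point $\z_0 \in C_i$ and set $F_i := X \cap \pi^{-1}(\z_0)$. For each cell $D$ of $\mathcal{D}$ with $D \subset X$ and $\pi(D) = C_i$, the cylindrical structure yields a canonical definable homeomorphism $\phi_D : (D \cap \pi^{-1}(\z_0)) \times C_i \to D$ commuting with the projection onto $C_i$. Each such $\phi_D$ is built by induction on $k$: each cell in the cylindrical decomposition is either the graph of a continuous definable function or the open region strictly between two such graphs (allowing $\pm\infty$) over a cell of $\R^{k-1+\ell}$, and the corresponding ``fiber coordinate'' can be normalized (e.g.\ via an affine rescaling between the two bounding graphs) so as to vary continuously with $\z \in C_i$. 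Gluing the $\phi_D$ across all cells $D$ of $X$ over $C_i$ yields the trivialization $h_i : F_i \times C_i \to X \cap \pi^{-1}(C_i)$. Compatibility with each $Y_j$ is automatic since $Y_j$ is a union of cells of $\mathcal{D}$: taking $G_j^{(i)} := Y_j \cap \pi^{-1}(\z_0)$, one has $h_i(G_j^{(i)} \times C_i) = Y_j \cap \pi^{-1}(C_i)$.

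The main obstacle is verifying that the piecewise maps $\phi_D$ glue into a single continuous (and hence definably homeomorphic) map on all of $X \cap \pi^{-1}(C_i)$. When $D \subset \overline{D'}$ with both cells lying over $C_i$, the trivializations on $D$ and $D'$ must match on the common boundary. This continuity propagates from the cylindrical structure itself: the graph function defining the lower-dimensional cell $D$ arises precisely as a boundary value of the pair of functions defining the higher-dimensional cell $D'$, and the interpolating normalization used to define $\phi_{D'}$ respects this limit. Carefully tracking this compatibility by induction on $k$ (the base case $k=0$ being trivial) is the technical heart of the argument, and is exactly the content of the Hardt triviality proof as written out in \cite[Chapter 9]{Dries}.
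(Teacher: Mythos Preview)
The paper does not actually supply a proof of this theorem: Hardt's triviality is quoted as a known result from o-minimal geometry, and earlier in the introduction the paper remarks only that it ``is a corollary of the existence of cylindrical cell decompositions of definable sets proved in \cite{KPS} (see also \cite{Dries,Michel2}).'' Your proposal is thus not competing against any argument in the paper, but rather supplying exactly the standard proof the paper points to, and in outline it is correct and follows the route in \cite[Chapter 9]{Dries}.

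One caution: the gluing step you flag as ``the main obstacle'' is genuinely where the work lies, and the sketch you give understates it slightly. The naive affine rescaling between bounding graphs need not extend continuously to the closure when two bounding functions coalesce along a face, so in practice one first passes to a \emph{stratified} (or ``good'') CCD in which the frontier condition holds and closures of cells are unions of cells; this is available in the o-minimal setting but requires a refinement of the raw CCD. With that refinement in hand, the inductive construction you describe goes through, and compatibility with the $Y_j$ is immediate as you say.
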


\begin{remark}
\label{rem:locally_closed}
We first remark that it is straightforward to derive from the proof of 
Theorem \ref{the:hardt} that the definable sets $C_1,\ldots,C_N$
can be chosen to be locally closed, and can be expressed as,
$C_1 = \R^\ell \setminus B_1, C_2 = B_1 \setminus B_2,
\ldots, C_N = B_{N-1} \setminus B_N$ for closed definable sets
$B_1,\ldots,B_N$. Clearly, the closed definable sets 
$B_1,\ldots,B_N$, determine the sets $C_i$ of the partition. 
\end{remark}

\begin{remark}
\label{rem:hardt}
Note also that it follows from Theorem \ref{the:hardt}, that
there are only a finite number of topological types amongst the
fibers of any definable map $f: X \rightarrow Y$ between definable
sets $X$ and $Y$. This remark would be used a number of times later
in the paper.
\end{remark}

Since in what follows we will need to consider many  different projections, 
we adopt the following convention.
\begin{notation}
\label{not:projections}
Given $m$ and $p$, $p\leq m$,  we will denote by 
\[
\pi_{m}^{\leq p}: \R^m \rightarrow \R^p
\]
(respectively $\pi_{m}^{>p}:\R^m \rightarrow \R^{m-p}$) 
the projection onto the first $p$ (respectively the last $m-p$) coordinates.
\end{notation}

\subsection{Definable Triangulations}

A triangulation
of a closed and bounded 
definable set $S$ is a simplicial complex 
$\Delta$ together with a
definable homeomorphism from $\vert \Delta\vert$ to $S$. 
Given such a triangulation we will often identify the simplices in
$\Delta$ with their images in $S$ under the given homeomorphism.
 
We call a triangulation $h_1: |\Delta_1| \rightarrow S$
of a definable set $S$, to be a {\em refinement}
of a triangulation
$h_2: |\Delta_2| \rightarrow S$ if
for every simplex $\sigma_1 \in \Delta_1$, there exists a simplex
$\sigma_2 \in \Delta_2$ such that $h_1(|\sigma_1|) \subset h_2(|\sigma_2|).$

Let $S_1 \subset S_2$ be two 
closed and bounded
definable subsets of
$\R^k$. We say that a definable 
triangulation $h: |\Delta| \rightarrow S_2$ of $S_2$, { \em respects}
$S_1$ if for every simplex $\sigma  \in \Delta$,
$h(\sigma) \cap S_1 = h(\sigma)$ or $\emptyset$.
In this case, $h^{-1}(S_1)$ is identified with a sub-complex
of $\Delta$ and
$h|_{h^{-1}(S_1)} :h^{-1}(S_1) \rightarrow S_1$ is a definable 
triangulation  of $S_1$. We will refer to this sub-complex  by 
$\Delta|_{S_1}$.

We introduce the following notational 
conventions 
in order to simplify arguments used later in the paper.

\begin{notation}
\label{not:unions}
If $T \subset \R^{k_1 + k_2 +\ell}$ be any definable subset
of $\R^{k_1+k_2+\ell}$,
for each $m \geq 0$, and $(\z,\y_0,\ldots,\y_m) \in \R^{k_2+ (m+1)\ell}$,
we will denote by $T_{\z,\y_0,\ldots,\y_m} \subset \R^{k_1}$ 
the definable set
$
\displaystyle{
\bigcup_{1 \leq i \leq m} \{\x \in \R^{k_1} \;\mid\;  (\x,\z) \in  T_{\y_i}\}
}
$.
For $\{j_0,\ldots,j_{m'}\} \subset [m]$, we will denote by 
$\pi_{m,j_0,\ldots,j_{m'}}: \R^{(m+1)\ell} \rightarrow \R^{(m'+1)\ell}$ the
projection map on the appropriate blocks of co-ordinates.
\end{notation}

It is well known that compact definable sets are triangulable and moreover
the usual proof of this fact (see for instance 
\cite{Michel2}) can be easily extended to 
produce a definable triangulation in a parametrized way. We will actually need 
a family of such triangulations satisfying certain compatibility
conditions mentioned before. The following proposition states the
existence of such families. We omit the proof of the proposition 
since it is a  technical but straightforward extension of the proof 
of existence of triangulations for definable sets.

\begin{proposition}[existence of $m$-adaptive triangulations]
\label{prop:triangulation}
Let $T \subset \R^{k_1 + k_2 +\ell}$ be a 
closed and bounded definable subset
of $\R^{k_1+k_2+\ell}$ and let $m \geq 0$.
For each $0 \leq p  \leq m$, there exists 
\begin{enumerate}
\item
a definable
partition $\{C_{p,\alpha}\}_{\alpha \in I_p}$ of $\R^{k_2 + (p+1)\ell}$, into
locally closed sets,  determined by a sequence of definable closed sets,
$\{B_{p,\alpha}\}_{\alpha \in I_p}$ (see Remark \ref{rem:locally_closed} above), 
and
\item
for each $\alpha \in I_p$, a  definable continuous map,
$$
\displaylines{
h_{p,\alpha}: |\Delta_{p,\alpha}| \times C_{p,\alpha} \rightarrow
\bigcup_{(\z,\y_0,\ldots,\y_p) \in C_{p,\alpha}} T_{\z,\y_0,\ldots,\y_p}
}
$$
where $\Delta_{p,\alpha}$ is a simplicial complex,
and such that for each $(\z,\y_0,\ldots,\y_p) \in 
C_{p,\alpha}$,
the restriction of
$h_{p,\alpha}$  to $|\Delta_{p,\alpha}| \times (\z,\y_0,\ldots,\y_p)$ is a 
definable triangulation
\[
h_{p,\alpha}: |\Delta_{p,\alpha}| \times (\z,\y_0,\ldots,\y_p) \rightarrow
T_{\z,\y_0,\ldots,\y_p} 
\]
of the definable set $T_{\z,\y_0,\ldots,\y_p}$ 
respecting the subsets,
$T_{\z,\y_0},\ldots,T_{\z,\y_p}$, and 
\item
for each subset $\{j_0,\ldots,j_{p'}\} \subset [p]$, 
$({\rm Id}_{k_2} ,\pi_{p,j_0,\ldots,j_{p'}})(C_{p,\alpha}) 
\subset C_{p',\beta}$
for some $\beta \in I_{p'}$, and
for each $(\z,\y_0,\ldots,\y_p) \in C_{p,\alpha}$, the definable triangulation
of $T_{\z,\y_{j_0},\ldots,\y_{j_{p'}}}$ induced by the triangulation
\[
h_{p,\alpha}: |\Delta_{p,\alpha}| \times (\z,\y_0,\ldots,\y_p) \rightarrow
T_{\z,\y_0,\ldots,\y_p}
\]
is a refinement of the definable triangulation,
\[
h_{p',\beta}: |\Delta_{p',\beta}| \times (\z,\y_{j_0},\ldots,\y_{j_{p'}}) 
\rightarrow
T_{\z,\y_{j_0},\ldots,\y_{j_{p'}}}.
\]
\end{enumerate}
(We will call the family $\{h_{p,\alpha}\}_{0 \leq p \leq m, \alpha \in I_p}$ 
an $m$-adaptive family of triangulations of $T$.)
\end{proposition}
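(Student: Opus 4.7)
My plan is to prove Proposition \ref{prop:triangulation} by induction on $p$, building the data for $p = 0, 1, \ldots, m$ in order, at each step ensuring that compatibility condition (3) holds against all strictly smaller levels already constructed.

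For the base case $p = 0$, I view $T \subset \R^{k_1}\times\R^{k_2+\ell}$ as a definable family over the base $\R^{k_2+\ell}$ and apply the standard parametrized definable triangulation theorem, which is a direct consequence of cylindrical cell decomposition together with Hardt's triviality (Theorem \ref{the:hardt}); see for instance \cite{Dries,Michel2}. This produces a finite locally closed definable partition $\{C_{0,\alpha}\}_{\alpha \in I_0}$ of $\R^{k_2+\ell}$ and, over each stratum, a continuous definable map $h_{0,\alpha}: |\Delta_{0,\alpha}|\times C_{0,\alpha}\to T\cap\pi^{-1}(C_{0,\alpha})$ whose restriction to each fiber is a triangulation. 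Remark \ref{rem:locally_closed} lets me put the partition in the required form, determined by a descending chain of closed definable sets.

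For the inductive step, assume the data has been built for all levels $p' < p$. First I apply the parametrized triangulation theorem to the definable family $\{T_{\z,\y_0,\ldots,\y_p}\}$ over $\R^{k_2+(p+1)\ell}$, demanding simultaneously that each constituent $T_{\z,\y_i}$ be respected; this produces a preliminary partition $\{C_{p,\alpha}'\}$ with triangulations $h_{p,\alpha}'$. Next I refine this partition by intersecting each $C_{p,\alpha}'$ with the pullbacks $(\mathrm{Id}_{k_2},\pi_{p,j_0,\ldots,j_{p'}})^{-1}(C_{p',\beta})$ over all subsets $\{j_0,\ldots,j_{p'}\}\subset[p]$ and all $\beta \in I_{p'}$. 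The result is still a finite definable partition into locally closed sets, and it already satisfies the strata-level part of condition (3). Finally, on each refined stratum, I replace the preliminary triangulation by a common refinement of $h_{p,\alpha}'$ with the pullbacks of all lower-level triangulations $h_{p',\beta}$, iterating once for each pair $(J,p')$; this yields triangulations $h_{p,\alpha}$ for which the induced triangulation on $T_{\z,\y_{j_0},\ldots,\y_{j_{p'}}}$ refines the one coming from level $p'$, as required.

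The main obstacle is the last step: producing these common simplicial refinements \emph{parametrically and definably}. What is needed is a uniform, parameter-dependent version of the standard common-refinement construction (iterated stellar subdivision, or barycentric subdivision together with the linear combinatorial data) applied to pairs consisting of a definable family of triangulations and a definable sub-family of coarser triangulations on a definable sub-family of subspaces. Since, by Hardt's triviality (Remark \ref{rem:hardt}), the combinatorial type of the common refinement takes only finitely many values over a definable base, any discontinuity in combinatorial type can be absorbed by further subdividing the partition $\{C_{p,\alpha}\}$ and re-expressing the refining strata in the locally closed form demanded by Remark \ref{rem:locally_closed}. Once this parametric common refinement is in hand, the remaining verifications — that the induced triangulations indeed refine the lower-level ones fiberwise and that all strata are locally closed in the required form — are routine, which is why the author calls the proposition a straightforward technical extension of ordinary definable triangulation.
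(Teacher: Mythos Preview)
The paper does not actually prove this proposition: immediately after stating it, the author writes ``We omit the proof of the proposition since it is a technical but straightforward extension of the proof of existence of triangulations for definable sets.'' Your inductive scheme---parametrized triangulation at level $p=0$, and at level $p$ first refining the base partition by the pullbacks of the lower-level strata and then applying the parametrized triangulation theorem so as to respect, in addition to the sets $T_{\z,\y_i}$, the (finitely many, definably parametrized) images of simplices from the lower-level triangulations---is exactly the kind of argument the author is gesturing at, and it is correct. One small simplification: rather than first producing a preliminary $h_{p,\alpha}'$ and then seeking a definable common refinement, you can in a single pass invoke the parametrized triangulation theorem over each refined stratum $C_{p,\alpha}$, asking it to respect the definable subsets $h_{p',\beta}(|\sigma|,\cdot)$ for all $\sigma\in\Delta_{p',\beta}$ and all relevant $(J,p',\beta)$; since these are finitely many definable families, this yields directly a triangulation whose restriction to each $T_{\z,\y_{j_0},\ldots,\y_{j_{p'}}}$ refines $h_{p',\beta}$, avoiding the separate ``parametric common refinement'' step you flagged as the main obstacle.
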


We will also need the following technical result.
\begin{proposition}
\label{prop:limit}
Let $C_t \subset \R^k, t\geq 0$ be a definable family of 
closed and bounded sets,
and let $C \subset \R^{k+1}$ 
be the definable set 
$
\displaystyle{
\bigcup_{t \geq 0} C_t \times \{t\}.
}
$
If for every $0 \leq t < t'$,
$C_t \subset C_{t'}$,
and 
$
\displaystyle{
C_0  = \pi_{k+1}^{\leq k} (\overline{C} \cap 
(\pi_{k+1}^{>k})^{-1}(0)),
}
$
then there exists $t_0 > 0$ such that,
$C_0$ has the same homotopy type as $C_t$ for every $t$ with $0 \leq t \leq t_0$.
\end{proposition}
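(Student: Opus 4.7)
The plan is to invoke Hardt's triviality theorem (Theorem \ref{the:hardt}) for the projection $\pi := \pi_{k+1}^{>k}$ restricted to the closed bounded definable set $\overline{C}$, tracking $C$ and $C_0\times\{0\}$ as compatible subsets. The theorem yields a finite definable partition of $\R_{\geq 0}$; by o-minimality each block is a finite union of points and intervals, so there exists $t_0 > 0$ for which $(0, t_0]$ lies entirely in a single block. Hardt's trivialization then gives a definable homeomorphism $h : F \times (0, t_0] \to C \cap \pi^{-1}((0, t_0])$ for some fixed definable $F$, so all fibers $C_t$ with $t \in (0, t_0]$ are definably homeomorphic to $F$.

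Next I would record that the hypotheses imply the set-theoretic inclusion $C_0 \subseteq C_t$ for every $t \in (0, t_0]$: any $x \in C_0$ lifts to $(x, 0) \in \overline{C}$, so there is a sequence $(y_n, s_n) \in C$ with $(y_n, s_n) \to (x, 0)$ and $s_n \leq t$ eventually; monotonicity gives $y_n \in C_{s_n} \subseteq C_t$, whence $x \in C_t$ by closedness of $C_t$. The task thus reduces to showing the inclusion $C_0 \hookrightarrow C_{t_0}$ is a homotopy equivalence. For this, I would build a definable deformation retraction $H : C_{t_0} \times [0,1] \to C_{t_0}$ with $H(\cdot, 0) = \mathrm{id}$ and $H(C_{t_0}, 1) \subseteq C_0$, by sliding each point $x \in C_{t_0}$ along the definable curve $\lambda \mapsto \pi_{k+1}^{\leq k}(h(y, (1-\lambda)t_0))$, where $y \in F$ corresponds to $x$. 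The limit at $\lambda = 1$ exists in $C_0$ by the o-minimal monotonicity theorem applied coordinatewise, combined with the hypothesis $C_0 = \pi_{k+1}^{\leq k}(\overline{C} \cap \pi^{-1}(0))$, which forces every limit point to lie in $C_0 \times \{0\}$.

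The main obstacle is the joint continuity of $H$ at $\lambda = 1$, since Hardt guarantees triviality only on the open stratum $(0, t_0]$ and not across the endpoint $t = 0$; pointwise convergence of the curves does not immediately yield a continuous retraction. To overcome this, I would apply Hardt a second time to the closed and bounded set $\overline{C} \cap \pi^{-1}([0, t_0])$ as a definable family over the compact base $[0, t_0]$, compatible with $C_0 \times \{0\}$. After possibly shrinking $t_0$ so that $[0, t_0]$ meets only the singleton $\{0\}$ and the open block containing $(0, t_0]$, one can either glue the two resulting trivializations by a standard o-minimal argument, or invoke a definable triangulation (in the spirit of Proposition \ref{prop:triangulation}) of $\overline{C} \cap \pi^{-1}([0, t_0])$ compatible with the zero fiber and collapse each simplex radially in the $t$-direction toward its trace on that fiber. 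Either route produces a continuous definable strong deformation retraction of $\overline{C} \cap \pi^{-1}([0, t_0])$ onto $C_0 \times \{0\}$, whose restriction to the slice $\{t = t_0\}$ gives the required homotopy equivalence $C_{t_0} \simeq C_0$; the identical argument applies for every $t \in (0, t_0]$.
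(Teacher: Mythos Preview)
Your approach is correct and coincides with the paper's: the paper's own proof is a one-sentence reference to Lemma~16.17 of \cite{BPRbook2}, noting that the semi-algebraic argument there adapts verbatim once semi-algebraic Hardt triviality is replaced by its o-minimal version (Theorem~\ref{the:hardt}). Your sketch---Hardt over $(0,t_0]$ to make all $C_t$ with $t\in(0,t_0]$ definably homeomorphic, followed by a definable retraction of $\overline{C}\cap\pi^{-1}([0,t_0])$ onto the zero fiber obtained from a compatible triangulation---is exactly that adaptation.
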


\begin{proof}
The proof given in \cite{BPRbook2} (see Lemma 16.17) for the semi-algebraic
case can be easily adapted to the o-minimal setting using Hardt's triviality
for definable families instead of for semi-algebraic ones.
\end{proof}

We now introduce another notational 
convention.

\begin{notation}
\label{not:infinitesimal}
Let ${\mathcal F}(x)$ be a predicate defined over $\R_+$ and $y \in \R_+$.
The notation $\forall(0 < x \ll y)\ {\mathcal F}(x)$
stands for the statement
$$\exists z \in (0,y)\ \forall x \in \R_+\ ({\rm if}\> x<z,\> {\rm then}\> 
{\mathcal F}(x)),$$
and can be read ``for all positive
$x$ sufficiently smaller than $y$, ${\mathcal F(x)}$ is true''.
\end{notation}

More generally, 
\begin{notation}
\label{not:small}
For 
$\bar \eps =(\eps_0,\ldots,\eps_n)$
and a predicate ${\mathcal F}(\bar \eps)$ over $\R_{+}^{n}$
we say
``for all sufficiently small $\bar \eps$, ${\mathcal F}(\bar \eps)$ is true'' 
if
$$
\forall(0 < \eps_{0} \ll 1)
\forall(0 < \eps_{1} \ll \eps_{0})
\cdots \forall(0 < \eps_{n} \ll \eps_{n-1})
{\mathcal F}(\bar \eps).
$$
\end{notation}

\subsection{Infinitesimal Thickenings of the Faces of a Simplex}
We will need the following construction.

Let $\bar\eps = (\eps_0,\ldots,\eps_n) \in \R_+^{n+1}$, with
$0 \leq  \eps_n <  \cdots < \eps_0 < 1$.
Later we will require $\bar\eps$ to be sufficiently small 
(see Notation  \ref{not:small}).

For a face $\Delta_J \in \Delta_{[n]}$, 
we denote by $C_{J}(\bar\eps)$ the subset of $|\Delta_J|$ defined by
\[
C_{J}(\bar\eps) = \{x \in |\Delta_J| \;\mid\; 
\dist(x,|\Delta_I|) \geq \eps_{\#I-1} \mbox{ for all }
I \subset J \}.
\]

Note that,
$$
\displaylines{
|\Delta_{[n]}| = 
\bigcup_{I \subset [n]} C_{I}(\bar\eps). 
}
$$

       \begin{figure}[hbt]
         \centerline{
           \scalebox{0.5}{
             \input{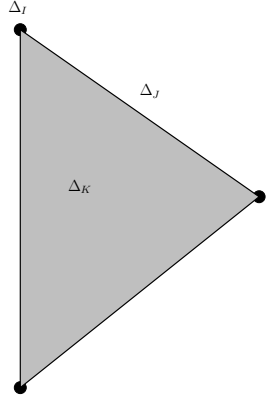}
             }
           }
         \caption{The complex $\Delta_{[n]}$.}
         \label{fig-eg1}
       \end{figure}

       \begin{figure}[hbt]
         \centerline{
           \scalebox{0.5}{
  		 \input{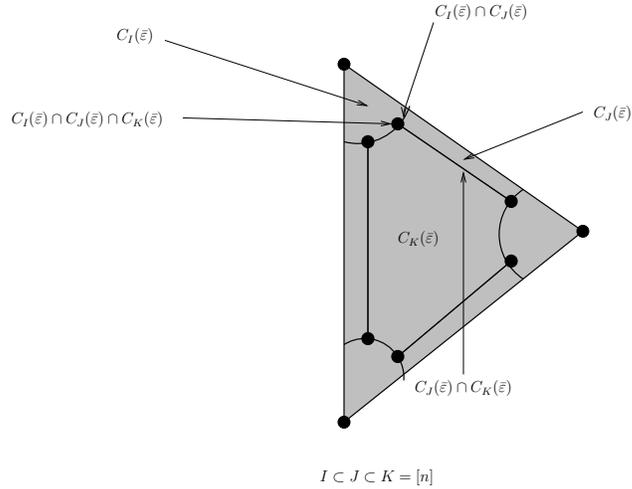}
             }
           }
         \caption{The corresponding complex 
${\mathcal C}(\Delta_{[n]})$ with $I \subset J \subset K =  [n]$.}
         \label{fig-eg2}
       \end{figure}

Also, observe that for sufficiently small $\bar\eps > 0$,
the various $C_J(\bar\eps)$'s 
are all homeomorphic to closed balls,
and moreover all non-empty intersections between them also have the 
same property.
Thus, the 
cells
$C_{J}(\bar\eps)$'s 
together with the
non-empty intersections between them form  a regular cell complex,
${\mathcal C}(\Delta_{[n]},\bar\eps)$, whose underlying
topological space is $|\Delta_{[n]}|$ 
(see Figures \ref{fig-eg1} and \ref{fig-eg2}). 

\begin{definition}
\label{def:defofC}
We will denote by ${\mathcal C}({\rm sk}_m(\Delta_{[n]}),\bar\eps)$
the sub-complex of ${\mathcal C}(\Delta_{[n]},\bar\eps)$ consisting of the
cells $C_{I}(\bar\eps)$'s 
together with the non-empty intersections between them 
where $|I| \leq m+1$.
\end{definition}

We now use thickened simplices defined above to define a thickened
version of the homotopy co-limit of an arrangement $\A$.

\subsection{Thickened Homotopy Co-limits}
Given an $m$-adaptive family of triangulations of $T$ (cf. Proposition
\ref{prop:triangulation}), 
$\{h_{p,\alpha}\}_{0 \leq p \leq m, \alpha \in I_p}$ 
and $\z \in \R^{k_2}$, 
we define a cell complex, 
$\hocolimit^+_m(\A_\z)$
(best thought of as an infinitesimally thickened version of
$\hocolimit_{m}(\A_\z)$),
whose associated topological space is homotopy equivalent
to $|\hocolimit_{m}(\A_\z)|$.

\begin{definition}[the cell complex $\hocolimit^+_m(\A_\z)$]
\label{def:defofK}
Let ${\mathcal C}_m$ denote the cell complex 
${\mathcal C}({\rm sk}_m(\Delta_{[n]}),\bar\eps)$
defined previously 
(cf.  Definition \ref{def:defofC}).

Let $C$ be a cell of ${\mathcal C}_m$.
Then, $C \subset |\Delta_I|$ for a unique simplex $\Delta_I$ 
with $I = \{i_0,\ldots,i_{m'}\} \subset [n]$, $m' \leq m$,
and (following notation introduced before in Definition \ref{def:defofC})

$$
\displaylines{
C =  C_{I_1}(\bar\eps) \cap \cdots \cap C_{I_p}(\bar\eps),
}
$$
with $I_1 \subset I_2 \subset \cdots \subset I_p \subset
I$ and $p \leq m'$. 

We denote by ${\mathcal K}(C,\bar\eps)$ the cell complex consisting of the
cells  
\[
C \times h_{m',\alpha}(|\sigma|,\z,\y_{i_0},\ldots,\y_{i_{m'}})
\]
with  $\alpha \in I_{m'}$, 
$(\z,\y_{i_0},\ldots,\y_{i_{m'}}) \in C_{\alpha,m'}$,
$\sigma \in \Delta_{m',\alpha}$, and
$
h_{m',\alpha}(|\sigma|,\z, \y_{i_0},\ldots,\y_{i_{m'}}) \subset
\A_{\z,I}
$.
We denote
\begin{equation}
\label{eqn:K_m(A_z)}
\hocolimit^+_m(\A_\z,\bar\eps)
= \bigcup_{C \in {\mathcal C}_m}
{\mathcal K}(C).
\end{equation}
\end{definition}

The compatibility properties 
(properties (2) and (3) in Proposition \ref{prop:triangulation})
of the $m$-adaptive family of triangulations of $T$,
$\{h_{p,\alpha}\}_{0 \leq p \leq m, \alpha \in I_p}$, 
ensure  that $\hocolimit^+_{m}(\A_{\z},\bar\eps)$ defined above is a regular
cell complex. Notice that, since the map $f_\A$ defined in Eqn. \ref{eqn:f_A} 
extends 
to  $|\hocolimit^+_m(\A_\z,\bar\eps)$, the notion of diagram
preserving maps extend to $|\hocolimit^+_m(\A_\z,\bar\eps)$ as well.

We now prove:
\begin{lemma}
\label{lem:thickenedlimit}
Let $\z \in \R^{\ell}$ and $m \geq 0$. Then, for all sufficiently small 
$\bar\eps > 0$,
$|\hocolimit^+_{m}(\A_{\z},\bar\eps)|$ 
is homotopy equivalent to $|\hocolimit_{m}(\A_{\z})|$
by a diagram preserving homotopy equivalence.
\end{lemma}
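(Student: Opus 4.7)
The plan is to mimic the strategy of Lemma \ref{lem:hom1}: construct an explicit diagram-preserving map
\[
F \colon |\hocolimit^+_m(\A_\z, \bar\eps)| \to |\hocolimit_m(\A_\z)|,
\]
and show it is a homotopy equivalence via Smale's version of the Vietoris--Begle theorem. For each cell $C \in {\mathcal C}_m$ with maximal associated index $I = \{i_0,\ldots,i_{m'}\}$, so $C \subset |\Delta_I|$ with $|I| \leq m+1$, and each simplex $\sigma \in \Delta_{m',\alpha}$ whose realization $|\sigma|_\z := h_{m',\alpha}(|\sigma|, \z, \y_{i_0}, \ldots, \y_{i_{m'}})$ lies in $\A_{\z, I}$, I would define $F$ on the cell $C \times |\sigma|_\z$ as the natural composition
\begin{equation*}
C \times |\sigma|_\z \hookrightarrow |\Delta_I| \times \A_{\z, I} \twoheadrightarrow |\hocolimit_m(\A_\z)|,
\end{equation*}
where the second arrow is the quotient by the gluing relation defining the homotopy co-limit. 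Well-definedness and continuity on overlaps of cells follow from properties (2) and (3) of the $m$-adaptive triangulations of Proposition \ref{prop:triangulation}: the regular cell structure of ${\mathcal C}_m$ organizes the boundary behaviour in the $|\Delta_{[n]}|$-factor, while the downward-compatibility of the triangulations organizes it in the $\A_\z$-factor.

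To prove $F$ is a homotopy equivalence I would analyze its fibers and apply Smale's theorem, just as in the proof of Lemma \ref{lem:hom1}. Given $p = [(s, y)] \in |\hocolimit_m(\A_\z)|$ with $s$ in the relative interior of its unique minimal containing face $|\Delta_J|$, the fiber $F^{-1}(p)$ is obtained as the union, over cells $C \in {\mathcal C}_m$ whose closures contain $s$, of the slices of the form $\{x\} \times \{y\}$ with $x$ identified to $s$ via the quotient; for sufficiently small $\bar\eps$ this union deformation retracts onto a single point, so $F^{-1}(p)$ is contractible. Smale's theorem then yields that $F$ is a homotopy equivalence and, following the proof of Lemma \ref{lem:hom1}, produces a cellular homotopy inverse $h$ such that $F \circ h$ is cellularly homotopic to the identity.

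The diagram-preserving property of $F$ itself is immediate from its cell-wise definition: a cell $C \times |\sigma|_\z$ of $|\hocolimit^+_m(\A_\z, \bar\eps)|$ sits inside $f_{\A_\z}^{-1}(|\A_{\z, I'}|)$ exactly when $|\sigma|_\z \subset \A_{\z, I'}$, and $F$ sends it to the corresponding $f_{\A_\z}^{-1}(|\A_{\z, I'}|)$ on the other side. The main obstacle will be ensuring that the cellular inverse $h$ and the homotopy $F \circ h \simeq \mathrm{Id}$ can be chosen diagram-preserving too, i.e., respecting the full filtration by the sub-arrangements $\A_\z[I']$ simultaneously. I would handle this by constructing $h$ inductively over cells, descending from large $I'$ to small through the filtration, and using property (3) of Proposition \ref{prop:triangulation} at each stage to extend the inverse consistently across the triangulated pieces of $\A_{\z, I'}$, so that the extension at stage $I'$ automatically preserves the already-constructed pieces at every $I'' \supset I'$.
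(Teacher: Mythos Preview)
Your approach is quite different from the paper's, and as written it contains a genuine gap.

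The paper does not construct any explicit map. It observes instead that $|\hocolimit^+_m(\A_\z,\bar\eps)|$ is a closed bounded definable set depending on the parameters $\eps_0,\ldots,\eps_m$, replaces $\eps_m$ by a real variable $t$, checks that the resulting one-parameter family $N^m_t$ is monotone in $t$, and applies Proposition~\ref{prop:limit} to conclude $N^m_{\eps_m}\simeq N^m_0$. It then repeats with $\eps_{m-1},\ldots,\eps_0$ in turn, reaching $N^0_0=|\hocolimit_m(\A_\z)|$ after $m+1$ steps; the diagram-preserving property is inherited at each stage. There is no Vietoris--Begle, no hand-built cellular inverse, and no induction over the index poset.

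The gap in your argument is in the fibre analysis. Recall that $|\hocolimit_m(\A_\z)|$ is realised concretely as the subspace $\bigcup_{|I|\le m+1}|\Delta_I|\times|\A_{\z,I}|$ of $|\Delta_{[n]}|\times\R^{k_1}$, and the equivalence classes of the gluing relation $\sim$ are all singletons there. Consequently your ``natural composition'' sends a point $(c,y)$ to the very same point $(c,y)$: your $F$ is simply the inclusion of one definable subset into the other. Smale's form of Vietoris--Begle requires a \emph{surjection} with contractible fibres; applied to an inclusion it yields only that $F$ is a homotopy equivalence onto its image, which is vacuous. To prove the lemma along your lines you would instead need, in the opposite direction, a diagram-preserving retraction of the larger set onto the smaller together with a diagram-preserving homotopy to the identity --- and producing that explicitly is exactly the work the paper's monotone-limit argument via Proposition~\ref{prop:limit} bypasses. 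Your description of the fibre as a ``union of slices that deformation retracts onto a single point'' does not address this: over image points the fibres of $F$ are already single points, and over non-image points they are empty.
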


\begin{proof}
Let $N =  |\hocolimit^+_{m}(\A_{\z},\bar\eps)|$.
First replace 
$\eps_m$ 
by a variable $t$ in the definition of 
$N$
to obtain a closed and bounded definable set, 
$N_{t}^m$, 
and observe that 
$N_{t}^m \subset N_{t'}^m$ for all $0 < t < t' \ll 1$. 

Now apply 
Proposition \ref{prop:limit} to obtain that 
$N$ 
is homotopy equivalent
to 
$N_{0}^m$. 
Now, replace $\eps_{m-1}$ by $t$ in the definition of 
$N_{0}^m$ to obtain $N_{t}^{m-1}$, and applying Proposition
\ref{prop:limit} obtain that $N_{0}^m$ is homotopy equivalent to
$N_{0}^{m-1}$. Continuing in this way we finally obtain that,
$N$ is homotopy equivalent to $N_{0}^0 = |\hocolimit_{m}(\A_{z})|$.
Moreover, the diagram preserving property is clearly preserved
at each step of the proof.
\end{proof}

\begin{proof}[Proof of Theorem \ref{the:homotopy_closed_stable}]
Recall that for  $m \geq 0$, 
and $(\z,\y_0,\ldots,\y_m) \in \R^{k_2+ (m+1)\ell}$,
we denote by $T_{\z,\y_0,\ldots,\y_m}$ 
the definable set
$$
\displaylines
{
\bigcup_{i=1}^{m} T_{\z,\y_i} \subset \R^{\ell}.
}
$$

Now apply Proposition \ref{prop:triangulation} to the set $T$
with $m = k_1$ to  obtain an $k_1$-adaptive family of triangulations
$\{h_{p,\alpha}\}_{1 \leq p \leq k_1, \alpha \in I_p}$.

We now fix $\{\y_1,\ldots,\y_n \} \subset \R^{\ell}$ and let
$\A = \{A_1,\ldots,A_n\}$ with $A_i = T_{\y_i} \subset \R^{k_1+k_2}$.
For each $\z \in \R^{k_2}$, we will denote by
$\A_z = \{A_{1,\z},\ldots,A_{n,\z} \}$ where
$A_{i,\z} = \{\x \in \R^{k_1} \;\mid\; (\x,\z) \in A_i \}$.

For $\alpha \in I_{k_1}$, 
and $1 \leq i_0 < \cdots < i_{k_1} \leq n$,
we will denote by 
$B_{k_1,\alpha,{i_0},\ldots,{i_{k_1}}} \subset \R^\ell$ 
the definable closed set
$$
\displaylines{
B_{k_1,\alpha,{i_0},\ldots,{i_{k_1}}} = 
\{ \z \in \R^\ell \;\mid \; (\z,\y_0,\ldots,\y_{k_1}) \in B_{k_1,\alpha} \}.
}
$$ 

Let 
$$
\displaylines{
{\mathcal B} = 
\bigcup_{\alpha \in I_{k_1}}\{ B_{k_1,\alpha,{i_0},\ldots,{i_{k_1}}}\;\mid\;
1 \leq i_0 < i_1 < \cdots < i_{k_1} \leq n \},
}
$$
and let 
$C \in {\mathcal C}({\mathcal B})$.
Theorem \ref{the:homotopy_closed_stable} will follow from the 
following 
two lemmas.

\begin{lemma}
\label{lem:homotopy}
For any 
$\z_1,\z_2 \in C$, 
$\A_{\z_1}$ is 
stable homotopy equivalent to $\A_{\z_2}$.
\end{lemma}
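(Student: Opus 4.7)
The plan is to use the thickened homotopy co-limits $|\hocolimit^+_{k_1}(\A_{\z},\bar\eps)|$ as intermediate objects and then invoke Theorem \ref{the:union_stable}.

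First I would observe that by construction of $\mathcal{B}$, any two points $\z_1,\z_2$ lying in a common basic $\mathcal{B}$-set $C$ satisfy the following: for every $(k_1+1)$-tuple $(i_0,\ldots,i_{k_1})$ drawn from $[n]$, the two points $(\z_j,\y_{i_0},\ldots,\y_{i_{k_1}})$, $j=1,2$, lie in a common piece $C_{k_1,\alpha}$ of the $k_1$-adaptive partition of $\R^{k_2+(k_1+1)\ell}$. This is because the closed definable sets $\{B_{k_1,\alpha}\}$ determining the partition $\{C_{k_1,\alpha}\}$ (cf.\ Remark \ref{rem:locally_closed}) slice at $(\y_{i_0},\ldots,\y_{i_{k_1}})$ to precisely the members of $\mathcal{B}$. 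Consequently, by property~(2) of Proposition \ref{prop:triangulation}, the triangulations $h_{k_1,\alpha}(\cdot,\z_j,\y_{i_0},\ldots,\y_{i_{k_1}})$ realize $T_{\z_j,\y_{i_0},\ldots,\y_{i_{k_1}}}$ as the same simplicial complex $\Delta_{k_1,\alpha}$ in a way that respects the pieces $A_{i_r,\z_j}$, and so yield a canonical simplicial identification between the two triangulated sub-arrangements $\A[I]_{\z_1}$ and $\A[I]_{\z_2}$ for every $I\subset [n]$ with $\# I\leq k_1+1$.

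Next I would invoke the downward compatibility property~(3) of Proposition \ref{prop:triangulation} to verify that these identifications, one for each $(k_1+1)$-tuple, agree on sub-tuples. Combined with the cellular decomposition \eqref{eqn:K_m(A_z)}, this lets me assemble the simplicial identifications into a diagram-preserving homeomorphism
\[
H:|\hocolimit^+_{k_1}(\A_{\z_1},\bar\eps)|\longrightarrow|\hocolimit^+_{k_1}(\A_{\z_2},\bar\eps)|,
\]
valid for all sufficiently small $\bar\eps$. I expect the gluing to be the main technical step: the delicate point is checking cell by cell that the identifications built from triangulations of $\A[I]_{\z}$ for various $I$ patch together coherently across common faces $C_{I'}(\bar\eps)\cap C_{I}(\bar\eps)$, and this is precisely what the refinement clause of Proposition \ref{prop:triangulation}(3) is designed to guarantee.

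Finally, applying Lemma \ref{lem:thickenedlimit} converts $H$ into a diagram-preserving homotopy equivalence between $|\hocolimit_{k_1}(\A_{\z_1})|$ and $|\hocolimit_{k_1}(\A_{\z_2})|$, so in particular $\A_{\z_1}\sim_{k_1}\A_{\z_2}$. To close the argument I would observe that each $|\A_{\z}^{[n]}|$ is a compact, triangulable, proper subset of $\R^{k_1}$; hence Alexander duality in $S^{k_1}$ gives $\HH_i(|\A_{\z}^{[n]}|,\Z)=0$ for every $i\geq k_1$. Theorem \ref{the:union_stable} applied with $k=k_1$ then concludes that $\A_{\z_1}$ and $\A_{\z_2}$ are stable homotopy equivalent, as required.
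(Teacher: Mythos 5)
Your proof follows essentially the same route as the paper: show that the thickened co-limits $|\hocolimit^+_{k_1}(\A_{\z_1},\bar\eps)|$ and $|\hocolimit^+_{k_1}(\A_{\z_2},\bar\eps)|$ are isomorphic cell complexes (hence homeomorphic), pass to $|\hocolimit_{k_1}(\A_{\z_j})|$ via Lemma \ref{lem:thickenedlimit}, and then invoke Theorem \ref{the:union_stable}. The only place you go beyond the paper's terse write-up is in explicitly checking hypothesis~(1) of Theorem \ref{the:union_stable} via Alexander duality in $S^{k_1}$ (the paper leaves this tacit), and in spelling out why points of a common basic $\mathcal{B}$-set yield matching pieces $C_{k_1,\alpha}$ for every $(k_1+1)$-tuple; both elaborations are correct and welcome.
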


\begin{proof}
Clearly, by Theorem \ref{the:union_stable} 
it suffices to prove that
$|\hocolimit_{k_1}(\A_{\z_1})|$ 
is diagram preserving 
homotopy equivalent to $|\hocolimit_{k_1}(\A_{\z_2})|$.

The compatibility properties of the triangulations 
ensure  that 
that the complex 
$|\hocolimit^+_{k_1}(\A_{\z_1},\bar\eps)$ is isomorphic to
$|\hocolimit^+_{k_1}(\A_{\z_2},\bar\eps)$ and hence
$|\hocolimit^+_{k_1}(\A_{\z_1},\bar\eps)|$ is homeomorphic to
$|\hocolimit^+_{k_1}(\A_{\z_1},\bar\eps)|$.

Using Lemma \ref{lem:thickenedlimit} 
we get a diagram preserving homotopy equivalence
$$
\displaylines{
\phi: |\hocolimit_{k_1}(\A_{\z_1})| \rightarrow 
|\hocolimit_{k_1}(\A_{\z_2})|.
}
$$ 

It now follows from Theorem \ref{the:union_stable} that 
the arrangements $\A_{\z_1}$ and $\A_{\z_2}$ are stable homotopy equivalent.
\end{proof}

\begin{lemma}
\label{lem:bound}
There exists a constant $C(T)$ such that the
cardinality of ${\mathcal C}({\mathcal B})$ is bounded by 
$C \cdot n^{(k_1+1)k_2}$.
\end{lemma}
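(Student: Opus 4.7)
The plan is to reduce Lemma \ref{lem:bound} to the Betti number bound in Theorem \ref{the:betti} by exhibiting ${\mathcal B}$ as a definable family drawn from a single fixed family depending only on $T$, and then counting its basic-set components.

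First I observe that by Proposition \ref{prop:triangulation} applied to $T$ with $m=k_1$, the closed definable sets $B_{k_1,\alpha} \subset \R^{k_2+(k_1+1)\ell}$ are indexed by $\alpha \in I_{k_1}$, and $|I_{k_1}|$ depends only on $T$. For each fixed $\alpha$, set $T_\alpha := B_{k_1,\alpha}$ and let $\rho_1^\alpha: \R^{k_2+(k_1+1)\ell}\to \R^{k_2}$, $\rho_2^\alpha: \R^{k_2+(k_1+1)\ell}\to \R^{(k_1+1)\ell}$ be the projections onto the first $k_2$ and last $(k_1+1)\ell$ coordinates. By the definition in the statement of the lemma,
\[
B_{k_1,\alpha,i_0,\ldots,i_{k_1}} \;=\; \rho_1^\alpha\!\left((\rho_2^\alpha)^{-1}(\y_{i_0},\ldots,\y_{i_{k_1}}) \cap T_\alpha\right),
\]
so as $(i_0,\ldots,i_{k_1})$ ranges over ordered $(k_1+1)$-tuples from $[n]$, the resulting sets form a $(T_\alpha,\rho_1^\alpha,\rho_2^\alpha)$-family in $\R^{k_2}$ of cardinality at most $\binom{n}{k_1+1}$.

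Next, applying Observation \ref{obs:unionoffamilies} to the finite collection $\{T_\alpha\}_{\alpha\in I_{k_1}}$, the union
\[
{\mathcal B} \;=\; \bigcup_{\alpha\in I_{k_1}} \{B_{k_1,\alpha,i_0,\ldots,i_{k_1}} \;\mid\; 1\le i_0<\cdots<i_{k_1}\le n\}
\]
is a $(T',\pi_1',\pi_2')$-family in $\R^{k_2}$ for a single definable set $T'$ depending only on $T$. Its cardinality satisfies
\[
\#{\mathcal B} \;\leq\; |I_{k_1}|\cdot \binom{n}{k_1+1} \;\leq\; C_1(T)\cdot n^{k_1+1},
\]
with $C_1(T)$ depending only on $T$.

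Finally, I apply Theorem \ref{the:betti} to the family ${\mathcal B}$ in $\R^{k_2}$ with $i=0$. Since the number of connected components of any definable set is bounded by its zeroth Betti number,
\[
\#{\mathcal C}({\mathcal B}) \;\leq\; \sum_{D\in {\mathcal C}({\mathcal B})} b_0(D) \;\leq\; C_2(T')\cdot (\#{\mathcal B})^{k_2} \;\leq\; C_2(T')\cdot C_1(T)^{k_2}\cdot n^{(k_1+1)k_2}.
\]
Taking $C := C_2(T')\cdot C_1(T)^{k_2}$, which depends only on $T$, yields the claimed bound. The only non-routine point is recognizing that the Hardt-partition pieces $B_{k_1,\alpha}$ supplied by Proposition \ref{prop:triangulation} are themselves definable sets depending only on $T$, so that their fibers over the parameter points $\y_{i_0},\ldots,\y_{i_{k_1}}$ constitute an admissible family in the sense of Definition \ref{def:admissible}; this step is precisely what Observation \ref{obs:unionoffamilies} was designed to handle.
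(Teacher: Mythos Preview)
Your proof is correct and follows essentially the same approach as the paper: you recognize each $B_{k_1,\alpha,i_0,\ldots,i_{k_1}}$ as a fiber of the fixed definable set $B_{k_1,\alpha}$, invoke Observation~\ref{obs:unionoffamilies} to merge the finitely many families (one per $\alpha\in I_{k_1}$) into a single admissible family depending only on $T$, and then apply Theorem~\ref{the:betti} with $i=0$ in $\R^{k_2}$. Your write-up is in fact more explicit than the paper's about the cardinality bound $\#{\mathcal B}\le |I_{k_1}|\binom{n}{k_1+1}$ and about how the constant is assembled.
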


\begin{proof}
Notice that each $B_{k_1,\alpha}, \alpha \in I_{k_1}$ is a definable
subset of $\R^{k_2 + (k_1+1)\ell}$ depending only on $T$. Also, the
cardinality of the index set $I_{k_1}$ is determined by $T$.

Hence, the set ${\mathcal B}$  consists of 
${n \choose {k_1+1}}$ definable sets,
each one of them is a 
\[
(B_{k_1,\alpha},\pi_{k_2+ (k_1+1)\ell}^{\leq k_2},
\pi_{k_2+ (k_1+1)\ell}^{> k_2})
\]
for some $\alpha \in I_{k_1}$.
Using Observation \ref{obs:unionoffamilies},
we have that ${\mathcal B}$ is a $(B,\pi_1',\pi_2')$-set for some
$B$ determined only by $T$. Now apply Theorem \ref{the:betti}.  
\end{proof}
The theorem now follows from Lemmas \ref{lem:homotopy} 
and \ref{lem:bound} proved above.  
\end{proof}

\begin{proof}[Proof of Theorem \ref{the:homotopy_closed_ordinary}]
The proof is similar to that of Theorem
\ref{the:homotopy_closed_stable} given above, except we use
Theorem \ref{the:union_ordinary} instead of Theorem \ref{the:union_stable},
and this accounts for the slight worsening of the exponent in the bound.
\end{proof}

\begin{proof}[Proof of Theorem \ref{the:homotopy_general_ordinary}]
Using a construction due to Gabrielov and Vorobjov
\cite{GV07} (see also \cite{Basu9})
it is possible to replace any given ${\mathcal A}$-set 
by  a closed bounded ${\mathcal A}'$-set
(where ${\mathcal A}'$ is a new family of definable closely
related to ${\mathcal A}$ with 
$\#\A' = 2k(\#\A)$),
such that the new set has the same homotopy type as the original one.
Using this construction
one can directly deduce Theorem \ref{the:homotopy_general_ordinary}
from Theorem \ref{the:homotopy_closed_ordinary}. We omit the details.
\end{proof}

\bibliographystyle{amsplain}
\bibliography{master}

\end{document}